% !TeX program = LuaLaTeX
\documentclass[10pt,a4paper]{amsart}
\usepackage[a4paper,marginratio=1:1]{geometry}
\usepackage[protrusion=all]{microtype}
\usepackage{amssymb}
\usepackage[initials,non-sorted-cites]{amsrefs}
\usepackage{mybibspec}
\usepackage[shortlabels]{enumitem}
\usepackage[symbol,perpage]{footmisc}
\usepackage{graphicx}
\usepackage{braket,tensor,bm,stmaryrd,fouridx}

\numberwithin{equation}{section}
\usepackage{mathtools}\mathtoolsset{showonlyrefs}

\newtheorem{thm}{Theorem}[section]
\newtheorem{prop}[thm]{Proposition}

\theoremstyle{definition}
\newtheorem{dfn}[thm]{Definition}
\theoremstyle{remark}
\newtheorem{rem}[thm]{Remark}

\newcommand{\abs}[1]{\lvert#1\rvert}
\newcommand{\Abs}[1]{\left\lvert#1\right\rvert}
\newcommand{\norm}[1]{\lVert#1\rVert}
\newcommand{\bdry}{\partial}
\newcommand{\orddot}{\mathord{\cdot}}
\newcommand{\compose}{\mathbin{\circ}}
\newcommand{\bdryX}{{\bdry X}}
\newcommand{\cornerX}{{\angle X}}

\DeclareMathOperator{\tr}{tr}
\DeclareMathOperator{\ran}{ran}
\DeclareMathOperator{\Hom}{Hom}
\DeclareMathOperator{\rank}{rank}

\title[Harmonic maps from the product of the hyperbolic planes]{Harmonic maps from the product of the hyperbolic planes to the hyperbolic space}
\author[K. Akutagawa]{Kazuo Akutagawa}
\address{Department of Mathematics, Chuo University, Bunkyo-ku, Tokyo 112-8551, Japan}
\email{akutagawa@math.chuo-u.ac.jp}
\author[Y. Matsumoto]{Yoshihiko Matsumoto}
\address{Department of Mathematics, Graduate School of Science, The University of Osaka, Toyonaka, Osaka 560-0043, Japan}
\email{matsumoto.yoshihiko.sci@osaka-u.ac.jp}
\subjclass[2020]{Primary 53C43; Secondary 58E20, 58J32}

\begin{document}

\maketitle

\begin{abstract}
	An existence result is shown for the asymptotic Dirichlet problem for
	harmonic maps from the product of the hyperbolic planes to the hyperbolic space,
	where the Dirichlet data is given on the distinguished boundary (the product of the circles at infinity).
\end{abstract}

\section*{Introduction}

This paper concerns harmonic maps with unbounded images between noncompact manifolds.
The first significant work regarding them appeared as the theory of the Gauss maps of
spacelike constant mean curvature hypersurfaces in the Minkowski spaces
by Choi and Treibergs \cites{Choi-Treibergs-88,Choi-Treibergs-90,Choi-Treibergs-93}.
Shortly thereafter, the asymptotic Dirichlet problem for such harmonic maps between the hyperbolic spaces was
investigated in a series of papers of Li and Tam \cites{Li-Tam-91,Li-Tam-93-I,Li-Tam-93-II} and,
for the hyperbolic disks, by the first author \cite{Akutagawa_94}.
Analogous results for harmonic maps
between other pairs of rank-one symmetric spaces of noncompact type was obtained by
Donnelly \cite{Donnelly-94-harmonic}.
More recently, the present authors studied the existence and uniqueness of
proper harmonic maps between asymptotically hyperbolic manifolds \cite{Akutagawa-Matsumoto-16}.

Despite these developments, various aspects of harmonic maps with unbounded images
remain to be clarified.
A natural next step is to investigate such harmonic maps between higher-rank symmetric spaces
of noncompact type,
and we shall focus here on the asymptotic Dirichlet problem for harmonic maps
from the product $\mathbb{H}^{m_1+1}\times\mathbb{H}^{m_2+1}$ of two hyperbolic spaces
into the hyperbolic space $\mathbb{H}^{n+1}$, where $m_1$, $m_2$, and $n$ are positive integers.
Although the product of two hyperbolic spaces is not an irreducible symmetric space,
it serves as a local model for certain higher-rank irreducible symmetric spaces of noncompact type.
There is no need to consider a product as the target,
since if $N=N_1\times N_2$ is a Riemannian product,
any harmonic map into $N$ splits as $u=(u_1,u_2)$, with
each $u_\lambda$ being a harmonic map into $N_\lambda$.

As an initial step in this direction, we establish the following theorem in this paper.

\begin{thm}
	\label{thm:main}
	Let $D$ and $B^{n+1}$ be the unit disk in $\mathbb{R}^2$ and the unit ball in $\mathbb{R}^{n+1}$,
	both equipped with the Poincar\'e metrics.
	Then, for any $C^\infty$-smooth map $\varphi\colon\bdry D\times\bdry D\to\bdry B^{n+1}$
	with nowhere vanishing factorwise differential,
	there exists a harmonic map $u\in C^\infty(D\times D,B^{n+1})$ that extends
	to a continuous map $u\in C^0(\overline{D\times D},\overline{B^{n+1}})$ satisfying
	$u|_{\bdry D\times\bdry D}=\varphi$ and
	$u(\bdry D\times D\cup D\times\bdry D)\subset B^{n+1}$.
\end{thm}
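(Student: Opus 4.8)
The plan is to adapt the approximation scheme used for the hyperbolic spaces by Li and Tam \cite{Li-Tam-91} and, for asymptotically hyperbolic manifolds, by the present authors in \cite{Akutagawa-Matsumoto-16}: one builds a smooth comparison map carrying the prescribed boundary behavior and having tension field small near $\bdry(D\times D)$, solves the Dirichlet problem on an exhaustion of $D\times D$ by relatively compact domains, and passes to the limit via a uniform estimate for the hyperbolic distance to the comparison map coming from the nonpositive curvature of $B^{n+1}$.

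\emph{Step 1: the comparison map.} The core of the argument is the construction of a map $u_0\in C^\infty(D\times D,B^{n+1})$ that (i) extends to $C^0(\overline{D\times D},\overline{B^{n+1}})$ with $u_0|_{\bdry D\times\bdry D}=\varphi$ and $u_0(\bdry D\times D\cup D\times\bdry D)\subset B^{n+1}$; (ii) has energy density bounded with respect to the two Poincar\'e metrics; and (iii) has tension field $\tau(u_0)$ decaying along all of $\bdry(D\times D)$ fast enough that the function $h(x):=C\bigl(\int_{D\times D}G(x,y)\,\abs{\tau(u_0)}(y)\,dV(y)\bigr)^{1/2}$, with $G$ the positive Green function of $D\times D$ and $C$ a suitable constant, is bounded on $\overline{D\times D}$ and tends to $0$ along $\bdry D\times D\cup D\times\bdry D$. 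I would build $u_0$ stratum by stratum on the corner $\overline{D\times D}$. Near a point of the distinguished boundary $\bdry D\times\bdry D$, working in the upper half-space model of $B^{n+1}$ centred at the value of $\varphi$ there and in coordinates $(t_\lambda,\theta_\lambda)$ in which each factor near its boundary circle reads $t_\lambda^{-2}(dt_\lambda^2+d\theta_\lambda^2)$, one seeks a model map $(t_1,\theta_1,t_2,\theta_2)\mapsto(s,\xi)$ solving the harmonic-map system of $\mathbb{H}^{n+1}$ to high finite order in $(t_1,t_2)$, with the leading horizontal term of $\xi$ equal to $\varphi(\theta_1,\theta_2)$ and the vertical coordinate $s$ tending to $0$ only when both $t_1,t_2\to0$. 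The hypothesis that the factorwise differential of $\varphi$ never vanishes is exactly what makes such a formal solution exist and be nondegenerate: it says that each coordinate circle of the torus is immersed in $\bdry B^{n+1}$, which is the nondegeneracy demanded by the disk-to-ball asymptotic problem in each factor (compare \cite{Akutagawa_94}). Near a point of $\bdry D\times D$ (and symmetrically $D\times\bdry D$) one uses instead the near-boundary model of the disk-to-ball problem attached to the immersed boundary circle $\varphi(x_1,\orddot)$. Patching these models with a partition of unity adapted to the corner structure of $\overline{D\times D}$ produces $u_0$.

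\emph{Step 2: distance estimate and limit.} For a smooth domain $\Omega\Subset D\times D$, Hamilton's solution of the Dirichlet problem for harmonic maps into complete nonpositively curved targets gives a harmonic $u_\Omega\colon\Omega\to B^{n+1}$ with $u_\Omega=u_0$ on $\bdry\Omega$. Since $B^{n+1}\times B^{n+1}$ is a Hadamard manifold, $\tfrac12 d_{B^{n+1}}^2$ is convex on it, so composing with $(u_\Omega,u_0)$ and using $\tau(u_\Omega,u_0)=(0,\tau(u_0))$ gives $\Delta_{D\times D}\bigl(\tfrac12 d_{B^{n+1}}(u_\Omega,u_0)^2\bigr)\ge-d_{B^{n+1}}(u_\Omega,u_0)\,\abs{\tau(u_0)}$; as $d_{B^{n+1}}(u_\Omega,u_0)=0$ on $\bdry\Omega$, a maximum principle comparison with the Green potential of $\abs{\tau(u_0)}$ on $\Omega$ (using $G_\Omega\le G$) yields $d_{B^{n+1}}(u_\Omega,u_0)\le h$ on $\Omega$, with $h$ as in Step 1 and independent of $\Omega$. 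Exhaust $D\times D$ by such $\Omega_R$ and set $u_R:=u_{\Omega_R}$, so $d_{B^{n+1}}(u_R,u_0)\le h$ uniformly. Together with the bounded energy density of $u_0$ this confines $u_R$ on unit balls to geodesic balls of $B^{n+1}$ of uniformly bounded radius, whence the interior a priori estimates for harmonic maps into nonpositively curved manifolds (Schoen--Uhlenbeck; Hildebrandt--Kaul--Widman), using that $D\times D$ is homogeneous and hence of bounded geometry, give $C^\infty_{\mathrm{loc}}$ bounds on the $u_R$ uniform for $R$ large. A diagonal subsequence then converges in $C^\infty_{\mathrm{loc}}$ to a harmonic map $u\in C^\infty(D\times D,B^{n+1})$ with $d_{B^{n+1}}(u,u_0)\le h$.

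\emph{Step 3: boundary behavior, and the main obstacle.} Since $d_{B^{n+1}}(u,u_0)\le h$ with $h$ bounded on $\overline{D\times D}$, and $u_0$ is continuous up to the boundary with the stated values, and a point of $B^{n+1}$ at bounded hyperbolic distance from a point tending to $\xi\in\bdry B^{n+1}$ also tends to $\xi$, the map $u$ extends continuously to $\overline{D\times D}$ with $u|_{\bdry D\times\bdry D}=\varphi$; and since $h\to0$ along $\bdry D\times D\cup D\times\bdry D$, where $u_0$ stays in the interior, $u$ agrees there in the limit with $u_0$, so $u(\bdry D\times D\cup D\times\bdry D)\subset B^{n+1}$. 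This proves the theorem. The delicate part is Step 1: one must realize the stratified boundary behavior forced by the product structure while making $\tau(u_0)$ decay near $\bdry D\times\bdry D$ — where $D\times D$ has exponential volume growth in two independent directions at infinity — fast enough to be integrable against $G$. Solving the $\mathbb{H}^{n+1}$ harmonic-map system near the torus to sufficiently high order, and checking that the factorwise nondegeneracy of $\varphi$ is enough to carry this out, is where the real work lies; once $u_0$ is in hand, Steps 2 and 3 are essentially standard.
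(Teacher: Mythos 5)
Your Steps 2 and 3 are a viable alternative to what the paper actually does: the paper runs the Li--Tam harmonic-map heat flow \cite{Li-Tam-91} starting from the approximate solution (using the positive bottom of the spectrum of $\mathbb{H}^2\times\mathbb{H}^2$ and $\abs{\tau(v)}^2\in L^p$ to get exponential convergence), whereas you use the elliptic exhaustion method with a Green-potential barrier, as in \cite{Akutagawa_94} and \cite{Li-Tam-93-I}. Both routes work once the comparison map is in hand; in your version you should note that the supersolution $\rho_1^{s}\rho_2^{s}$, $0<s<1$, gives the barrier directly, and that at the corner you only need boundedness (not decay) of $d_{\mathbb{H}^{n+1}}(u,u_0)$ to conclude continuity, since bounded hyperbolic distance collapses to zero Euclidean distance at the ideal boundary.

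The genuine gap is Step 1, which you yourself flag as ``where the real work lies'' --- and it is more than half of the paper. Two points. First, your proposal to ``solve the harmonic-map system to high finite order in $(t_1,t_2)$'' near the torus is not what the paper does and is not known to be feasible: the system is corner-degenerate, and Proposition \ref{prop:dimension-restriction} shows that already the leading-order consistency at the corner is delicate (it forces $m_1=m_2=1$). The paper's mechanism is different: it prescribes on each face of $\bdryX$ the slice-wise Li--Tam harmonic extension $\tilde{\varphi}$ (Definition \ref{dfn:initial-extension-to-topological-boundary}), and then Proposition \ref{prop:sufficient-conditions-of-approximate-solution} shows that \emph{any} $C^{2,\alpha}$ extension of $\tilde{\varphi}$ to $\overline{X}$ satisfying \eqref{eq:properness-at-corner} automatically has $\abs{\tau}=O(\rho_1^{\alpha/2}\rho_2^{\alpha/2})$ --- the decay near the corner comes from the slice-wise harmonicity on the two faces combined with the lower bound $u^0\gtrsim x_1+x_2$, not from any high-order corner expansion. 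Second, to make this work one must know that the family of Li--Tam extensions $\tilde{\varphi}(\orddot,q_2)$ depends smoothly on $q_2$ (and symmetrically), which is Theorem \ref{thm:smooth-dependence-of-harmonic-extension-on-hyperbolic-space}; its proof requires the isomorphism theorem for the linearized tension operator on weighted H\"older spaces (Proposition \ref{prop:isomorphism-theorem-for-linearized-harmonic-map-Laplacian}, in the style of Graham--Lee and Lee) together with Economakis's polyhomogeneity, and it is here (as well as in the tension-field computation above) that $m=1$ is used essentially. Finally, the two face expansions must actually be glued into a single $C^{2,\alpha}$ map across the corner; the paper does this with a real blowup (the cutoff in $\arctan(x_2/x_1)$) and a nontrivial H\"older estimate for the remainders. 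Your partition-of-unity patching does not address the compatibility of the two face models at the corner, which is exactly where the blowup is needed.
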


Here, we say that $\varphi$ has nowhere vanishing factorwise differential
if $d_1\varphi$ and $d_2\varphi$ are both nowhere vanishing,
where $d\varphi=d_1\varphi+d_2\varphi$ is the decomposition corresponding to
\begin{equation}
	T^*_{(q_1,q_2)}(\bdry D\times\bdry D)\otimes T_{\varphi(q_1,q_2)}\bdry B^{n+1}
	=(T^*_{q_1}\bdry D\otimes T_{\varphi(q_1,q_2)}\bdry B^{n+1})
	\oplus(T^*_{q_2}\bdry D\otimes T_{\varphi(q_1,q_2)}\bdry B^{n+1}).
\end{equation}

To this end, we first make a few preliminary observations in Section \ref{sec:prelim}.
In Section \ref{sec:smooth-dependence-on-boundary-values}, we study the dependence of
harmonic maps between hyperbolic spaces discussed in \cite{Li-Tam-93-I,Li-Tam-93-II}
on smooth families of boundary data.
This is then used as an auxiliary result for the construction of approximate solutions to our problem
in Section \ref{sec:approx-solution}.
In the last section, we show Theorem \ref{thm:main}
using results in \cite{Li-Tam-91} on the heat equation for harmonic maps.

The idea of prescribing the boundary value only along the corner $\bdry D\times\bdry D$ of the domain
is motivated by the fact that a holomorphic function in an open polydisk in $\mathbb{C}^m$
that admits continuous extension to the closure is uniquely determined by its restriction to
the distinguished boundary (the Shilov boundary).
Proposition \ref{prop:individual-harmonicity} will strengthen the reason why we should consider
such restricted boundary values.
Furthermore, note that typical examples of harmonic maps $u\colon D\times D\to D$ are given by
the holomorphic maps
\begin{equation}
	D\times D\to D,\qquad (z,w)\mapsto z^kw^l,
\end{equation}
where $k$, $l\in\mathbb{N}$.
The condition $u(\bdry D\times D\cup D\times\bdry D)\subset B^{n+1}$ in the statement of our theorem
is incorporated in view of these examples.
So to speak, the harmonic maps that we construct are ``proper only at the corner.''

The dimensional restriction $m_1=m_2=1$ in our main theorem is technically indispensable for us.
Indeed, Proposition \ref{prop:dimension-restriction} shows that
harmonic maps $u\colon B^{m_1+1}\times B^{m_2+1}\to B^{n+1}$ satisfying the conditions as in
Theorem \ref{thm:main} that belong to
$C^2(\overline{B^{m_1+1}\times B^{m_2+1}},\overline{B^{n+1}})$
do not exist unless $m_1=m_2=1$.
While we do not assert such $C^2$-regularity for the harmonic maps in Theorem \ref{thm:main},
this fact suggests that the cases with higher-dimensional domains require
substantially more intricate considerations.
This is why we confine ourselves to the lowest-dimensional setting.
Furthermore, our proof of Theorem \ref{thm:smooth-dependence-of-harmonic-extension-on-hyperbolic-space},
which is given based on techniques from the theory of conformally compact Einstein metrics,
also relies heavily on the assumption $m_1=m_2=1$.

The both authors have greatly benefited by many discussions with Rafe Mazzeo.
The first and second authors are supported in part by JSPS KAKENHI Grant Numbers 24K06718 and 24K06738, respectively.

\section{Preliminary observations}
\label{sec:prelim}

In this section, we write $X=\mathbb{H}^{m_1+1}\times\mathbb{H}^{m_2+1}$,
where $m_1$ and $m_2$ are positive integers.
The space $X$ is equipped with the product metric of the hyperbolic metrics, which is denoted by $g$.
Each hyperbolic space $\mathbb{H}^{m_\lambda+1}$, $\lambda=1$, $2$, will be identified
with the Poincar\'e unit ball $B^{m_\lambda+1}$.
The upper-half space model will also be used for computational purposes.

We write
\begin{equation}
	\overline{X}=\overline{B^{m_1+1}}\times\overline{B^{m_2+1}}
\end{equation}
and
\begin{equation}
	\bdryX=(\bdry B^{m_1+1}\times\overline{B^{m_2+1}})\cup(\overline{B^{m_1+1}}\times\bdry B^{m_2+1}).
\end{equation}
The latter will be referred to
as the \emph{topological boundary} of $\overline{X}$ to emphasize its contrast to
the \emph{distinguished boundary} or the \emph{corner}
\begin{equation}
	\cornerX=\bdry B^{m_1+1}\times\bdry B^{m_2+1}.
\end{equation}

We are interested in studying harmonic maps from $X$
targeted into $\mathbb{H}^{n+1}$, identified with $B^{n+1}$.
More specifically, we want to construct a harmonic map $u\colon X\to B^{n+1}$
that can be (at least) continuously extended to a map $u\colon\overline{X}\to\overline{B^{n+1}}$
that assumes a given value $\varphi\colon \cornerX\to\bdry B^{n+1}$ along the corner.
Recall that a $C^2$ map $u\colon X\to B^{n+1}$ is said to be \emph{harmonic}
if its tension field $\tau(u)$ vanishes,
where $\tau(u)$ is the section of $u^*TB^{n+1}$ defined by $\tau(u)=-\nabla^*du$.

We impose the following ``properness only at the corner'' condition on harmonic maps $u$ that we seek:
\begin{equation}
	\label{eq:properness-at-corner}
	u(\cornerX)\subset\bdry B^{n+1},\qquad u(\overline{X}\setminus\cornerX)\subset B^{n+1}.
\end{equation}

Regarding the corner value $\varphi\colon\cornerX\to\bdry B^{n+1}$,
which we assume to be at least $C^1$, we introduce an assumption that
\begin{equation}
	\label{eq:nondegeneracy}
	\text{$d_1\varphi$ and $d_2\varphi$ are both nowhere vanishing.}
\end{equation}

In what follows, we will sometimes need the energy density $e(\varphi)=\abs{d\varphi}^2$ of
$\varphi\colon\cornerX\to\bdry B^{n+1}$.
In order to define this,
we can either use the standard round metric of the spheres $\bdry B^{m_\lambda+1}$ and $\bdry B^{n+1}$,
or locally, via the upper-half space model,
the Euclidean metric on $\bdry H_+^{m_\lambda+1}$ and on $\bdry H_+^{n+1}$.
Whichever metric we use, there is a decomposition
\begin{equation}
	\label{eq:energy-density-decomposition}
	e(\varphi)=e_1(\varphi)+e_2(\varphi)
\end{equation}
corresponding to the decomposition $d\varphi=d_1\varphi+d_2\varphi$
of the differential into the $T^*\bdry\mathbb{H}^{m_1+1}$ and $T^*\bdry\mathbb{H}^{m_2+1}$ components.

For computations of the tension field of $u$,
let us trivialize $TB^{n+1}$ and thus $u^*TB^{n+1}$ using the upper-half space coordinates of $\mathbb{H}^{n+1}$.
That is, instead of $B^{n+1}$, we identify $\mathbb{H}^{n+1}$ with the upper-half space
\begin{equation}
	H_+^{n+1}=\set{(\xi,\eta^1,\dots,\eta^n)|\xi>0}\subset\mathbb{R}^{n+1},
\end{equation}
and express the mapping $u$ as $(u^0,u^1,\dots,u^n)$
and the tension field $\tau(u)$ as $(\tau^0(u),\tau^1(u),\dots,\tau^n(u))$ accordingly,
where
\begin{equation}
	\tau(u)=\tau^0(u)\partial_\xi+\sum_{i=1}^n\tau^i(u)\partial_{\eta^i}.
\end{equation}
We will also abbreviate $(\tau^0(u),\tau^1(u),\dots,\tau^n(u))$ as $(\tau^0,\tau^1,\dots,\tau^n)$.

Let $(x_\lambda,y_\lambda)=(x_\lambda,y_\lambda^1,\dots,y_\lambda^{m_\lambda})$ be
the standard coordinates in the upper-half space model
of the factor $\mathbb{H}^{m_\lambda+1}$ in $X$.
Then, if we write $\overline{g}_\lambda$ for the Euclidean metric on $H^{m_\lambda+1}_+$,
the components of the tension field are given by the following formula:
\begin{subequations}
	\label{eq:tension-field}
\begin{align}
	\label{eq:tension-field-normal}
	\frac{1}{u^0}\tau^0
	&=\sum_{\lambda=1}^{2}
	x_\lambda^2
	\left(\frac{1}{u^0}\Delta_{\overline{g}_\lambda}u^0
	-\frac{m_\lambda-1}{x_\lambda u^0}\partial_{x_\lambda}u^0
	-\frac{1}{(u^0)^2}\left(\abs{\nabla_{\overline{g}_\lambda}u^0}_{\overline{g}_\lambda}^2
	-\sum_{i=1}^n\abs{\nabla_{\overline{g}_\lambda}u^i}_{\overline{g}_\lambda}^2\right)\right),\\
	\label{eq:tension-field-tangential}
	\frac{1}{u^0}\tau^i
	&=\sum_{\lambda=1}^{2}
	x_\lambda^2
	\left(\frac{1}{u^0}\Delta_{\overline{g}_\lambda}u^i
	-\frac{m_\lambda-1}{x_\lambda u^0}\partial_{x_\lambda}u^i
	-\frac{1}{(u^0)^2}
	\braket{\nabla_{\overline{g}_\lambda}u^0,\nabla_{\overline{g}_\lambda}u^i}_{\overline{g}_\lambda}\right).
\end{align}
\end{subequations}
Here, $\Delta_{\overline{g}_\lambda}$ denotes the Euclidean Laplacian:
\begin{equation}
	\Delta_{\overline{g}_\lambda}=
	\partial_{x_\lambda}^2+\partial_{y_\lambda^1}^2+\dots+\partial_{y_\lambda^{m_\lambda}}^2.
\end{equation}

We begin with the following observation concerning the ``individual harmonicity on $\bdryX$''
of harmonic maps, or more generally, of mappings whose tension field vanishes along $\bdryX$.

\begin{prop}
	\label{prop:individual-harmonicity}
	Let $u\colon\overline{X}\to\overline{B^{n+1}}$ be a continuous map
	satisfying the condition \eqref{eq:properness-at-corner} such that
	\begin{enumerate}[(i)]
		\item
			\label{item:twice-differentiability-up-to-boundary-off-corner}
			$u\in C^2(\overline{X}\setminus\cornerX,B^{n+1})$, and
		\item
			\label{item:tension-field-vanishing-at-boundary}
			The pointwise norm $\abs{\tau(u)}$ of the tension field
			tends to zero uniformly at $\bdryX$.
	\end{enumerate}
	Then, $u|_\bdryX$ must be individually harmonic for each variable,
	i.e., the mapping
	\begin{equation}
		u_1=u(\orddot,q_2)\colon B^{m_1+1}\to B^{n+1}\qquad
		\text{(resp.\ $u_2=u(q_1,\orddot)\colon B^{m_2+1}\to B^{n+1}$)}
	\end{equation}
	is harmonic for any $q_2\in\bdry B^{m_2+1}$ (resp.\ for any $q_1\in\bdry B^{m_1+1}$).
\end{prop}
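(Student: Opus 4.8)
The plan is to pass to the limit, as one approaches $\bdryX$, in the product tension-field formula \eqref{eq:tension-field}, exploiting that each factor's contribution carries a prefactor $x_\lambda^2$ coming from the conformal weight of that factor's hyperbolic metric. By interchanging the two factors it is enough to prove the assertion for the first variable, so I would fix $q_2\in\bdry B^{m_2+1}$ and set $u_1=u(\orddot,q_2)\colon B^{m_1+1}\to B^{n+1}$; the goal is to show $u_1$ is harmonic. First note that for $p_1\in B^{m_1+1}$ the point $(p_1,q_2)$ lies in $\bdryX\setminus\cornerX$ — it is in $\bdryX$ because $q_2\in\bdry B^{m_2+1}$, and outside $\cornerX$ because $p_1$ is in the \emph{open} ball $B^{m_1+1}$ — so both hypotheses of the proposition are available along the slice $B^{m_1+1}\times\{q_2\}$. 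In particular, by \ref{item:twice-differentiability-up-to-boundary-off-corner}, $u$ is $C^2$ near $(p_1,q_2)$ as a map of manifolds with boundary (the boundary coming only from the second factor), so $u_1\in C^2(B^{m_1+1},B^{n+1})$; and by \eqref{eq:properness-at-corner} one has $u(p_1,q_2)\in B^{n+1}$, so $u_1$ is genuinely $B^{n+1}$-valued and $\tau(u_1)$ is defined. I would then pick coordinates adapted to $q_2$: an upper-half space model of $\mathbb{H}^{m_2+1}$ whose point at infinity is not $q_2$, so that $q_2$ sits on the finite boundary $\{x_2=0\}$ and approaching it from $X$ means $x_2\downarrow 0$, while any upper-half space models serve for $\mathbb{H}^{m_1+1}$ and for the target $\mathbb{H}^{n+1}$.

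In such coordinates, at an interior point $(p_1,p_2)\in X$, the formula \eqref{eq:tension-field} reads $\tfrac{1}{u^0}\tau^\alpha(u)(p_1,p_2)=A^\alpha(p_1,p_2)+B^\alpha(p_1,p_2)$, where $A^\alpha$ is the $\lambda=1$ summand and $B^\alpha$ the $\lambda=2$ summand. This is nothing but the identity $\tau(u)(p_1,p_2)=\tau\bigl(u(\orddot,p_2)\bigr)(p_1)+\tau\bigl(u(p_1,\orddot)\bigr)(p_2)$, valid because the metric on $X$ is a Riemannian product; thus $A^\alpha(p_1,p_2)$ depends on $u$ only through the slice $u(\orddot,p_2)\colon\mathbb{H}^{m_1+1}\to\mathbb{H}^{n+1}$ and equals its $\alpha$-th tension-field component at $p_1$.

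Next I would fix $p_1\in B^{m_1+1}$ and let $p_2\to q_2$, tracking the three pieces. Since $(p_1,q_2)\in\bdryX$, hypothesis \ref{item:tension-field-vanishing-at-boundary}, together with $\abs{\tau(u)}^2=\sum_\alpha(u^0)^{-2}(\tau^\alpha)^2$, forces $\tfrac{1}{u^0}\tau^\alpha(u)(p_1,p_2)\to 0$. By \ref{item:twice-differentiability-up-to-boundary-off-corner}, $u$ itself and its $x_1,y_1$-derivatives of order $\le 2$ extend continuously across $\{x_2=0\}$ near $(p_1,q_2)$, with values there equal to those of $u_1$ and its derivatives at $p_1$, and $u^0(p_1,p_2)\to u_1^0(p_1)>0$; hence $A^\alpha(p_1,p_2)\to\tfrac{1}{u_1^0(p_1)}\tau^\alpha(u_1)(p_1)$. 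Finally $B^\alpha(p_1,p_2)\to 0$: every term of the $\lambda=2$ summand of \eqref{eq:tension-field} carries the prefactor $x_2^2$, whereas $\partial_{x_2}u^\beta$, $\Delta_{\overline{g}_2}u^\beta$, $\abs{\nabla_{\overline{g}_2}u^\beta}_{\overline{g}_2}^2$ and $\braket{\nabla_{\overline{g}_2}u^0,\nabla_{\overline{g}_2}u^\beta}_{\overline{g}_2}$ ($\beta=0,\dots,n$) stay bounded near $(p_1,q_2)$, being continuous up to $\{x_2=0\}$, and so do $1/u^0$ and $1/(u^0)^2$; thus the a priori most dangerous term $x_2^2\cdot\tfrac{m_2-1}{x_2u^0}\partial_{x_2}u^\beta$ is $O(x_2)$ and all remaining terms are $O(x_2^2)$. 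Combining the three limits gives $\tau^\alpha(u_1)(p_1)=0$ for all $\alpha$; since $p_1$ was arbitrary, $u_1$ is harmonic, and the second variable is handled by symmetry.

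The only step I expect to require genuine care is the vanishing of $B^\alpha$: the term $\tfrac{1}{x_2}\partial_{x_2}u^\beta$ is a priori singular as $x_2\downarrow 0$, and it is precisely the $C^2$-regularity up to $\bdryX$ in \ref{item:twice-differentiability-up-to-boundary-off-corner} that keeps it of size $O(1/x_2)$, so that the $x_2^2$ from the hyperbolic conformal weight of the second factor absorbs it; merely knowing $u$ to be $C^2$ in the interior of $X$ would not suffice. Note also that no regularity or harmonicity of $u$ in the interior of $X$ is used beyond what is needed to write \eqref{eq:tension-field}.
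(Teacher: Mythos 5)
Your proposal is correct and follows essentially the same route as the paper's proof: fix $q_2$ on the hyperplane $\{x_2=0\}$ of an upper-half space model, observe that the $\lambda=2$ summand of \eqref{eq:tension-field} vanishes in the limit $p_2\to q_2$ thanks to the $x_2^2$ prefactor and the $C^2$-regularity up to $\bdryX\setminus\cornerX$, and conclude from hypothesis \ref{item:tension-field-vanishing-at-boundary} that the surviving $\lambda=1$ summand, which is $\tfrac{1}{u_1^0}\tau^\alpha(u_1)(p_1)$, must be zero. Your write-up is in fact slightly more explicit than the paper's about why the singular-looking term $x_2^2\cdot\tfrac{m_2-1}{x_2u^0}\partial_{x_2}u^\beta$ is harmless, but the argument is the same.
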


\begin{proof}
	It suffices to show that $u_1$ is harmonic.
	Let $q_2\in\bdry B^{m_2+1}$, and we also fix $p_1\in B^{m_1+1}$.
	Instead of the Poincar\'e ball models,
	take the upper-half space models of the respective hyperbolic spaces in such a way that
	$q_2$ lies on the hyperplane $x_2=0$,
	and express the tension field as $\tau(u)=(\tau^0(u),\tau^1(u),\dots,\tau^n(u))$.
	Then, as $p_2\in\mathbb{H}^{m_2+1}$ tends to $q_2$,
	the $\lambda=2$ term in the right-hand side of \eqref{eq:tension-field-normal}
	evaluated at $(p_1,p_2)$ converges to zero.
	Consequently, $(u^0(p_1,p_2))^{-1}\tau^0(p_1,p_2)$ tends to the $\lambda=1$ term evaluated at $(p_1,q_2)$.
	If we write the tension field of $u_1=u(\orddot,q_2)$ as
	$\tau(u_1)=(\tau^0(u_1),\tau^1(u_1),\dots,\tau^n(u_1))$, the above observation shows that
	\begin{equation}
		\label{eq:limit-in-individual-harmonicity-proof}
		\lim_{p_2\to q_2}\frac{1}{u^0(p_1,p_2)}\tau^0(u)(p_1,p_2)
		=\frac{1}{u^0(p_1,q_2)}\tau^0(u_1)(p_1).
	\end{equation}
	Since
	\begin{equation*}
		\abs{\tau(u)}^2=\left(\frac{1}{u^0}\tau^0(u)\right)^2
		+\sum_{i=1}^n\left(\frac{1}{u^0}\tau^i(u)\right)^2,
	\end{equation*}
	our assumption (ii) implies that the limit \eqref{eq:limit-in-individual-harmonicity-proof} must be zero,
	which means that $\tau^0(u_1)(p_1)$ vanishes.
	Similar discussion using \eqref{eq:tension-field-tangential} implies that $\tau^i(u_1)(p_1)=0$.
\end{proof}

Recall from \cite[Lemma 1.3]{Li-Tam-93-I} that,
if $u\in C^\infty(B^{m+1},B^{n+1})$ is a proper harmonic mapping that extends to
$u\in C^1(\overline{B^{m+1}},\overline{B^{n+1}})$
such that $\varphi=u|_{\bdry B^{m+1}}$ has nowhere vanishing differential, then
\begin{equation}
	\label{eq:Li-Tam-lemma}
	\left.\frac{\partial u^0}{\partial x}\right|_{x=0}=\sqrt{\frac{e(\varphi)}{m}}
	\qquad\text{and}\qquad
	\left.\frac{\partial u^i}{\partial x}\right|_{x=0}=0
\end{equation}
with respect to the upper-half space coordinates.
(Although originally stated with respect to the polar coordinates, the result remains true
for the upper-half space coordinates;
see also Akutagawa--Matsumoto \cite{Akutagawa-Matsumoto-16}*{Lemma 4}.)
In view of this, the following is immediate.

\begin{prop}
	\label{prop:Li-Tam-asymptotics}
	Let $u\colon\overline{X}\to\overline{B^{n+1}}$ be a continuous map
	satisfying \eqref{eq:properness-at-corner},
	and assume that $\varphi=u|_\cornerX$ is a $C^1$ map satisfying \eqref{eq:nondegeneracy}.
	Suppose also that \ref{item:twice-differentiability-up-to-boundary-off-corner},
	\ref{item:tension-field-vanishing-at-boundary} in Proposition \ref{prop:individual-harmonicity} and
	\begin{enumerate}[(i), start=3]
		\item
			\label{item:once-differentiability-up-to-topological-boundary}
			$u\in C^1(\overline{X},\overline{B^{n+1}})$
	\end{enumerate}
	are satisfied.
	Then locally, with respect to the upper-half space coordinates,
	\begin{equation}
		\left.\frac{\partial u^0}{\partial x_\lambda}\right|_{x_1=x_2=0}=\sqrt{\frac{e_\lambda(\varphi)}{m_\lambda}}
		\qquad\text{and}\qquad
		\left.\frac{\partial u^i}{\partial x_\lambda}\right|_{x_1=x_2=0}=0
	\end{equation}
	for $\lambda=1$ and $2$,
	where $e_\lambda(\varphi)$ is the partial energy density in \eqref{eq:energy-density-decomposition}
	defined by the Euclidean metrics on $\bdry H^{m_\lambda+1}_+$ and $\bdry H^{n+1}_+$.
\end{prop}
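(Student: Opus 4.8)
The plan is to reduce the statement to the one-factor result \eqref{eq:Li-Tam-lemma} by slicing. Fix a point $(q_1,q_2)\in\cornerX$ and, just as in the proof of Proposition \ref{prop:individual-harmonicity}, pass to upper-half space models of $\mathbb{H}^{m_1+1}$, $\mathbb{H}^{m_2+1}$, and $\mathbb{H}^{n+1}$ in which $q_1$ and $q_2$ lie on the finite boundary hyperplanes $\{x_1=0\}$ and $\{x_2=0\}$, respectively. The hypotheses \ref{item:twice-differentiability-up-to-boundary-off-corner}, \ref{item:tension-field-vanishing-at-boundary} together with the continuity of $u$ and \eqref{eq:properness-at-corner} are exactly those of Proposition \ref{prop:individual-harmonicity}, so the slice $u_1=u(\orddot,q_2)\colon B^{m_1+1}\to B^{n+1}$ is harmonic; by symmetry it suffices to establish the $\lambda=1$ identities, and the $\lambda=2$ case then follows verbatim with the roles of the two factors interchanged.

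Next I would check that $u_1$ meets the hypotheses of the Li--Tam lemma recalled in \eqref{eq:Li-Tam-lemma}. Properness: by \eqref{eq:properness-at-corner} one has $u_1(B^{m_1+1})\subset u(\overline{X}\setminus\cornerX)\subset B^{n+1}$, while $u_1$ extends continuously to $\overline{B^{m_1+1}}$ with $u_1|_{\bdry B^{m_1+1}}=\varphi(\orddot,q_2)$ taking values in $\bdry B^{n+1}$, and a continuous self-map of the closed ball carrying boundary to boundary is proper on the open ball. Interior smoothness: since $B^{m_1+1}\times\{q_2\}$ avoids the corner, $u_1$ is $C^2$ on $B^{m_1+1}$ by hypothesis \ref{item:twice-differentiability-up-to-boundary-off-corner}, hence $C^\infty$ there by elliptic regularity for the harmonic map equation. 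Regularity up to the boundary: $u_1\in C^1(\overline{B^{m_1+1}},\overline{B^{n+1}})$ by hypothesis \ref{item:once-differentiability-up-to-topological-boundary}. Nondegeneracy of the boundary map: the differential of $\varphi(\orddot,q_2)$ is $d_1\varphi|_{(\orddot,q_2)}$, which is nowhere vanishing by \eqref{eq:nondegeneracy}.

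Applying \eqref{eq:Li-Tam-lemma} to $u_1$ then yields $\partial u_1^0/\partial x_1|_{x_1=0}=\sqrt{e(\varphi(\orddot,q_2))/m_1}$ and $\partial u_1^i/\partial x_1|_{x_1=0}=0$ in the upper-half space coordinates, the energy density being taken with respect to the Euclidean metrics on $\bdry H^{m_1+1}_+$ and $\bdry H^{n+1}_+$. Because $q_2$ was placed on $\{x_2=0\}$, the left-hand sides evaluated at $q_1$ coincide with $\partial u^0/\partial x_1|_{x_1=x_2=0}$ and $\partial u^i/\partial x_1|_{x_1=x_2=0}$ at $(q_1,q_2)$; and by the definition of the partial energy density in \eqref{eq:energy-density-decomposition} we have $e(\varphi(\orddot,q_2))=\abs{d_1\varphi}^2=e_1(\varphi)$ at $(q_1,q_2)$. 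This is the assertion for $\lambda=1$. I do not expect a genuine analytic obstacle---the substance has already been packaged into Proposition \ref{prop:individual-harmonicity} and into \cite[Lemma 1.3]{Li-Tam-93-I}---so the only point demanding care is the coordinate bookkeeping: one must ensure that the upper-half space model used to analyze $u_1$ as a map of hyperbolic spaces is genuinely the restriction of the model used for $u$ on the product, which is precisely why $q_2$ is taken on $\{x_2=0\}$ from the outset and why the resulting derivative is evaluated at $x_1=x_2=0$.
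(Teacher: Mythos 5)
Your proposal is correct and is exactly the argument the paper intends: the paper presents Proposition \ref{prop:Li-Tam-asymptotics} as ``immediate'' from Proposition \ref{prop:individual-harmonicity} (harmonicity of the boundary slices) combined with the Li--Tam lemma \eqref{eq:Li-Tam-lemma}, and your write-up simply carries out that verification, including the correct identifications $e(\varphi(\orddot,q_2))=e_1(\varphi)$ and $\partial u_1^I/\partial x_1=\partial u^I/\partial x_1|_{x_2=0}$.
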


The above fact on the asymptotic behavior leads to the following uniqueness result.

\begin{thm}
	\label{thm:uniqueness}
	Let $u$ and $\tilde{u}$ be two harmonic maps from $X$ to $B^{n+1}$
	satisfying all the assumptions in Proposition \ref{prop:Li-Tam-asymptotics},
	and moreover we assume that $u|_{\cornerX}=\tilde{u}|_{\cornerX}$.
	Then $u=\tilde{u}$ in $X$.
\end{thm}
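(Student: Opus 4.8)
The plan is to use a maximum principle argument adapted to this product setting, in the spirit of the uniqueness proofs of Li--Tam \cite{Li-Tam-93-I,Li-Tam-93-II} for a single hyperbolic factor. Work in the upper-half space model $H_+^{n+1}$ for the target. The key quantity to control is the hyperbolic distance $\mathrm{dist}_{\mathbb{H}^{n+1}}(u(p),\tilde u(p))$, or more conveniently a smooth comparison function such as
\begin{equation}
	F=\frac{(u^0-\tilde u^0)^2+\sum_{i=1}^n(u^i-\tilde u^i)^2}{u^0\tilde u^0}
\end{equation}
which is a standard substitute for $\cosh\mathrm{dist}-1$ and which vanishes exactly where $u=\tilde u$. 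The strategy is: (1) show $F$ extends continuously to $\overline{X}$ with $F=0$ on $\cornerX$ (this uses $u|_{\cornerX}=\tilde u|_{\cornerX}=\varphi$); (2) show $F$, or a suitable modification of it, is a subsolution of an elliptic operator on $X$; (3) rule out an interior positive maximum and control the behaviour at the topological boundary $\bdryX$ using Proposition \ref{prop:individual-harmonicity} and Proposition \ref{prop:Li-Tam-asymptotics}, whence $F\equiv 0$.

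For step (2), I would compute $\Delta_g F$ (equivalently, combine the two Euclidean Laplacians via \eqref{eq:tension-field}) using $\tau(u)=\tau(\tilde u)=0$. Because the target $\mathbb{H}^{n+1}$ has nonpositive (indeed constant negative) curvature, the Bochner-type computation for the distance between two harmonic maps yields $\Delta_g F\ge (\text{nonnegative curvature term})\ge 0$ modulo lower-order terms with the right sign; the convexity of the distance function along geodesics in $\mathbb{H}^{n+1}$ is what makes $F$ (or $\cosh\mathrm{dist}(u,\tilde u)$) subharmonic. Since $X=\mathbb{H}^{m_1+1}\times\mathbb{H}^{m_2+1}$ splits, $\Delta_g=\Delta_{\mathbb{H}^{m_1+1}}+\Delta_{\mathbb{H}^{m_2+1}}$, and the subharmonicity is obtained factorwise by the same argument.

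The main obstacle is step (3): the domain $\overline{X}$ is noncompact and the boundary behaviour is delicate precisely because $u$ and $\tilde u$ are only ``proper at the corner.'' On the topological boundary $\bdryX\setminus\cornerX$, where $u$ maps into the \emph{interior} $B^{n+1}$, one must show $F$ cannot achieve a positive maximum. Here is where Proposition \ref{prop:individual-harmonicity} enters: along $\bdry B^{m_1+1}\times B^{m_2+1}$, both $u$ and $\tilde u$ restrict to honest harmonic maps $\mathbb{H}^{m_2+1}\to B^{n+1}$, and along the approach to $\cornerX$ the precise first-order asymptotics of Proposition \ref{prop:Li-Tam-asymptotics} pin down the boundary derivatives of $u^0$ and $u^i$ in terms of $\varphi$ alone, so $u$ and $\tilde u$ agree to first order at $\cornerX$; this forces $F\to 0$ at $\cornerX$ and, combined with a Hopf-lemma / boundary-point argument on the faces $\bdryX\setminus\cornerX$ using the interior harmonicity there, prevents a boundary maximum on those faces as well. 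One then invokes the maximum principle on $X$ (using a suitable barrier or the stochastic completeness of $\mathbb{H}^{m_1+1}\times\mathbb{H}^{m_2+1}$ to handle the noncompactness, together with the established vanishing of $F$ along all of $\bdryX$) to conclude $\sup_X F=0$, i.e.\ $u=\tilde u$.
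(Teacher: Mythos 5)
Your overall skeleton --- a $\cosh\circ\,\mathrm{dist}$-type comparison function $F$, its subharmonicity from the negative curvature of the target (steps (1) and (2)), and a maximum principle on $X$ --- is exactly the paper's strategy, and those parts are sound. The genuine gap is in step (3), on the faces $\bdryX\setminus\cornerX$. You only establish $F=0$ on the corner $\cornerX$; a subharmonic function on $X$ that extends continuously to $\overline{X}$ and vanishes on $\cornerX$ can perfectly well attain a positive maximum on a face, so the maximum principle does not close without knowing $F=0$ on \emph{all} of $\bdryX$. The Hopf-lemma substitute you propose cannot supply this: the faces are the conformal infinity of $X$, where the metric is complete and degenerates in the compactified coordinates, so there is no boundary-point lemma in the usual form; and even formally, a subharmonic $F$ with a positive maximum at a face point would only be forced to have nonvanishing outward normal derivative there, which contradicts nothing you know about $F$. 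Likewise, stochastic completeness is beside the point --- once $F$ is continuous on the compact $\overline{X}$, the ordinary maximum principle suffices, but only after the boundary values are pinned down.

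The missing step is one you are a single sentence away from: having observed (via Proposition \ref{prop:individual-harmonicity}) that for each $q_2\in\bdry B^{m_2+1}$ the slices $u(\orddot,q_2)$ and $\tilde{u}(\orddot,q_2)$ are proper harmonic maps $B^{m_1+1}\to B^{n+1}$ that are $C^1$ up to the boundary with the \emph{same} boundary value $\varphi(\orddot,q_2)$ (nowhere vanishing differential by \eqref{eq:nondegeneracy}), you should invoke the Li--Tam uniqueness theorem for proper harmonic maps between hyperbolic spaces, \cite{Li-Tam-93-I}*{Theorem 1.1}, to conclude $u(\orddot,q_2)=\tilde{u}(\orddot,q_2)$, and symmetrically in the other factor. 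This gives $u|_{\bdryX}=\tilde{u}|_{\bdryX}$, hence $F=0$ on the whole topological boundary; Proposition \ref{prop:Li-Tam-asymptotics} then controls the degenerate denominator $u^0\tilde{u}^0$ near the corner so that $F$ is continuous on $\overline{X}$ and vanishes on $\bdryX$, and the maximum principle finishes the proof. This is precisely how the paper argues (with the subharmonicity drawn from J\"ager--Kaul or Schoen--Yau).
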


\begin{proof}
	It follows from Proposition \ref{prop:individual-harmonicity} and
	\cite{Li-Tam-93-I}*{Theorem 1.1} that $u|_{\bdryX}=\tilde{u}|_{\bdryX}$.
	By this fact and Proposition \ref{prop:Li-Tam-asymptotics},
	if we define $\psi\colon X\to B^{n+1}\times B^{n+1}$ by $\psi(p)=(u(p),\tilde{u}(p))$,
	the function $d_{\mathbb{H}^{n+1}}\compose\psi$ continuously extends to $\overline{X}$ and vanishes on $\bdryX$,
	where $d_{\mathbb{H}^{n+1}}$ is the hyperbolic distance on $B^{n+1}$.
	Since $u$ and $\tilde{u}$ are harmonic, we have
	\begin{equation}
		\Delta_g(d_{\mathbb{H}^{n+1}}\compose\psi)=\tr_g\psi^*\nabla^2d_{\mathbb{H}^{n+1}},
	\end{equation}
	and the right-hand side is non-negative by the computations in \cite{Jager-Kaul-79}.
	It follows from the maximum principle that $d_{\mathbb{H}^{n+1}}\compose\psi$ equals zero identically.
	(One can also argue by the fact that $(d_{\mathbb{H}^{n+1}}\compose\psi)^2$ is a subharmonic function,
	which follows from \cite{Schoen-Yau-79}*{Proposition 1}.)
\end{proof}

We close this section with the following notable observation.

\begin{prop}
	\label{prop:dimension-restriction}
	Suppose that $u\colon\overline{X}\to\overline{B^{n+1}}$ satisfies all the assumptions in
	Proposition \ref{prop:Li-Tam-asymptotics}, with \ref{item:once-differentiability-up-to-topological-boundary}
	strengthened to
	\begin{enumerate}[(i$^\sharp$), start=3]
		\item
			\label{item:twice-differentiability-up-to-topological-boundary}
			$u\in C^2(\overline{X},\overline{B^{n+1}})$.
	\end{enumerate}
	Then $m_1=m_2=1$.
\end{prop}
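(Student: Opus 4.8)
\emph{Proof strategy.}\quad The plan is to insert the boundary asymptotics of Proposition~\ref{prop:Li-Tam-asymptotics} into the equation $\tau^0(u)=0$ and to extract, from the leading-order behavior of this equation along a radial approach to $\cornerX$, the numerical identity $m_1+m_2=2$; since $m_1$ and $m_2$ are positive integers, this forces $m_1=m_2=1$.

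First I would fix a point $q_0\in\cornerX$, take local upper-half space coordinates $(x_\lambda,y_\lambda)$, $\lambda=1,2$, centered at the two boundary points corresponding to $q_0$, and write $a_\lambda=\sqrt{e_\lambda(\varphi)/m_\lambda}$, which is positive by \eqref{eq:nondegeneracy}. Since $u\in C^2(\overline{X},\overline{B^{n+1}})$ satisfies \eqref{eq:properness-at-corner}, the components $u^0,\dots,u^n$ and their first partial derivatives extend continuously to $\cornerX$, and there one has $u^0=0$, $u^i=\varphi^i$, $\partial_{y_\lambda^k}u^0=0$, $\partial_{y_\lambda^k}u^i=\partial_{y_\lambda^k}\varphi^i$, as well as $\partial_{x_\lambda}u^0=a_\lambda$ and $\partial_{x_\lambda}u^i=0$ by Proposition~\ref{prop:Li-Tam-asymptotics}. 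The second-order part of hypothesis \ref{item:twice-differentiability-up-to-topological-boundary} will be used only to ensure that $\Delta_{\overline{g}_\lambda}u^0$ stays bounded as one approaches $\cornerX$.

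Since $u$ is harmonic, $\tau^0=0$, so multiplying \eqref{eq:tension-field-normal} by $(u^0)^2$ (legitimate in the interior, where $u^0>0$) yields
\begin{equation*}
	0=\sum_{\lambda=1}^{2}x_\lambda^2\left(u^0\Delta_{\overline{g}_\lambda}u^0-\frac{(m_\lambda-1)u^0}{x_\lambda}\,\partial_{x_\lambda}u^0-\abs{\nabla_{\overline{g}_\lambda}u^0}_{\overline{g}_\lambda}^2+\sum_{i=1}^{n}\abs{\nabla_{\overline{g}_\lambda}u^i}_{\overline{g}_\lambda}^2\right).
\end{equation*}
I would then evaluate this identity along the radial path $x_1=x_2=s$, with $y_1,y_2$ frozen at the coordinates of $q_0$, and let $s\to0^+$. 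On this path $u^0=(a_1+a_2)\,s+o(s)$, so $u^0/x_\lambda\to a_1+a_2$, whereas $x_\lambda^2\,u^0\Delta_{\overline{g}_\lambda}u^0=O(s^3)$; hence the $\lambda$-th summand equals $s^2$ times a quantity tending to
\begin{equation*}
	-(m_\lambda-1)(a_1+a_2)a_\lambda-a_\lambda^2+e_\lambda(\varphi)=-(m_\lambda-1)\,a_1a_2,
\end{equation*}
where the cancellation of the $(m_\lambda-1)a_\lambda^2$ terms is exactly the normalization $e_\lambda(\varphi)=m_\lambda a_\lambda^2$ from Proposition~\ref{prop:Li-Tam-asymptotics}. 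Dividing the identity by $s^2$ and letting $s\to0$ collapses it to $0=-(m_1+m_2-2)\,a_1a_2$, and since $a_1a_2>0$ this gives $m_1+m_2=2$, i.e.\ $m_1=m_2=1$.

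The step that needs the most care is the limit computation just described: one must verify that every contribution other than the two displayed leading terms is genuinely $o(s^2)$ — this is the only place where $C^2$-regularity up to $\bdryX$, rather than just $C^1$, is actually invoked, namely to control the $u^0\Delta_{\overline{g}_\lambda}u^0$ terms — and one must keep precise track of the cancellation of the $(m_\lambda-1)a_\lambda^2$ contributions; without that normalization the radial limit of the harmonic map equation would not reduce to a clean constraint on $m_1+m_2$.
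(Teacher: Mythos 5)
Your proof is correct and follows essentially the same route as the paper: the paper likewise substitutes the asymptotics $u^0=a_1x_1+a_2x_2+o(\abs{x})$, $u^i=o(\abs{x})$ into \eqref{eq:tension-field-normal} along a radial half-line $t\mapsto(t\gamma_1,y_1,t\gamma_2,y_2)$, uses the $C^2$-hypothesis only to bound $\Delta_{\overline{g}_\lambda}u^0$, and finds the limit $(2-m_1-m_2)\,a_1a_2\gamma_1\gamma_2/(a_1\gamma_1+a_2\gamma_2)^2=0$. Your only (harmless) specializations are taking the diagonal direction $\gamma_1=\gamma_2=1$ and clearing denominators by $(u^0)^2$ instead of working with $\tau^0/u^0$ directly.
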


\begin{proof}
	By Proposition \ref{prop:Li-Tam-asymptotics},
	if we locally set $a_\lambda(y_\lambda)=\sqrt{e_\lambda(\varphi)(y_\lambda)/m_\lambda}$
	for $y_\lambda\in\bdry H^{m_\lambda+1}_+$, then
	$u=(u^0,u^1,\dots,u^n)$ has the following asymptotic behavior as $x=(x_1,x_2)\to (0,0)$:
	\begin{equation}
		u^0(x_1,y_1,x_2,y_2)=a_1(y_1)x_1+a_2(y_2)x_2+o(\abs{x}),\qquad
		u^i(x_1,y_1,x_2,y_2)=o(\abs{x}).
	\end{equation}
	Let us fix $y_1$, $y_2$ and take fixed $\gamma_1$, $\gamma_2>0$,
	and examine the behavior of the right-hand side of \eqref{eq:tension-field-normal}
	along the half-line $t\mapsto (t\gamma_1,y_1,t\gamma_2,y_2)$ when $t\to +0$.
	Since $\Delta_{\overline{g}_\lambda}u^0$ is bounded
	because of assumption \ref{item:twice-differentiability-up-to-topological-boundary},
	it converges to
	\begin{equation}
		\label{eq:tension-field-of-harmonic-map}
		-\sum_{\lambda=1}^2\left(\frac{(m_\lambda-1)\gamma_\lambda}{a_1\gamma_1+a_2\gamma_2}\cdot a_\lambda
		+\frac{\gamma_\lambda^2}{(a_1\gamma_1+a_2\gamma_2)^2}(a_\lambda^2-e_\lambda(\varphi))\right),
	\end{equation}
	and by assumption \ref{item:tension-field-vanishing-at-boundary} this must be zero.
	One can check that \eqref{eq:tension-field-of-harmonic-map} equals
	\begin{equation*}
		(2-m_1-m_2)\frac{a_1a_2\gamma_1\gamma_2}{(a_1\gamma_1+a_2\gamma_2)^2}.
	\end{equation*}
	Therefore, $2-m_1-m_2$ has to be zero, which means that $m_1=m_2=1$ must be met.
\end{proof}

\section{Boundary value dependence of proper harmonic maps from $\mathbb{H}^2$ to $\mathbb{H}^{n+1}$}
\label{sec:smooth-dependence-on-boundary-values}

We observed in Proposition \ref{prop:individual-harmonicity} that,
under certain assumptions, the individual harmonicity can be shown for
the (topological) boundary value of harmonic maps $X=B^{m_1+1}\times B^{m_2+1}\to B^{n+1}$
extendible to $\overline{X}\to\overline{B^{n+1}}$.
Although we do not claim that the harmonic map we construct in our proof of Theorem \ref{thm:main} satisfies
the assumptions in Proposition \ref{prop:individual-harmonicity},
the individual harmonicity is the guiding principle for the first step of our construction.
We define as follows.

\begin{dfn}
	\label{dfn:initial-extension-to-topological-boundary}
	For a $C^\infty$-smooth map $\varphi\colon\cornerX\to\bdry B^{n+1}$
	satisfying \eqref{eq:nondegeneracy},
	we define the map $\tilde{\varphi}\colon\bdryX\to\overline{B^{n+1}}$ by the requirement that
	$\tilde{\varphi}(\orddot,q_2)$ (resp.\ $\tilde{\varphi}(q_1,\orddot)$)
	is the $C^1$ harmonic extension of $\varphi(\orddot,q_2)$ (resp.~$\varphi(q_1,\orddot)$).
\end{dfn}

This definition is justified by classical results of Li--Tam \cite{Li-Tam-93-I,Li-Tam-93-II}.
Any $C^1$ map $\varphi\colon\bdry B^{m+1}\to \bdry B^{n+1}$ with nowhere vanishing differential
admits a $C^1$ extension $u\colon\overline{B^{m+1}}\to\overline{B^{n+1}}$
for which $u(B^{m+1})\subset B^{n+1}$ and $u|_{B^{m+1}}$ is a harmonic map with respect to
the Poincar\'e metrics \cite{Li-Tam-93-II}*{Theorem 6.4}.
The uniqueness of such an extension follows from \cite{Li-Tam-93-I}*{Theorem 1.1}.

Definition \ref{dfn:initial-extension-to-topological-boundary}
leads us to the question on the dependence of the Li--Tam $C^1$ harmonic extension on parameters,
for we want our initial extension $\tilde{\varphi}$ to have some amount of regularity.
Namely, given a $C^\infty$-smooth family of boundary values $\bdry B^{m+1}\to \bdry B^{n+1}$,
we want to show that the resulting family of the Li--Tam $C^1$ extension is also $C^\infty$-smooth in some sense.
We solve this problem when $m=1$ in this section; the restriction on the dimension
is due to technical limitations of our method.
We write $D=B^2$ in what follows.

To proceed further, we recall what have been known on the boundary regularity
of the Li--Tam $C^1$ harmonic extension.
Let $\varphi\colon\bdry B^{m+1}\to\bdry B^{n+1}$ be a $C^\infty$-smooth map
with nowhere vanishing differential.
Then, the harmonic extension $u\in C^1(\overline{B^{m+1}},\overline{B^{n+1}})$
actually belongs to $C^{m+1,\alpha}(\overline{B^{m+1}},\overline{B^{n+1}})$ for any $\alpha\in(0,1)$.
A result close to it is obtained in \cite{Li-Tam-93-II}*{Theorem 5.1}, but precisely,
it is a consequence of the existence of polyhomogeneous expansion of $u$ established by
Economakis \cite{Economakis-93-Thesis}.
Let us specialize to the case $m=1$ for simplicity.
Then, \cite{Economakis-93-Thesis}*{Theorem 1.0.2} implies that
the Li--Tam extension $u\in C^1(\overline{D},\overline{B^{n+1}})$ locally admits,
in the expression $u=(u^0,u^1,\dots,u^n)$ with respect to the upper-half space coordinates,
the asymptotic expansion
\begin{equation}
	\label{eq:polyhomogeneous-expansion}
	u^I(x,y)\sim \varphi^I(y)+x\sum_{l=0}^\infty u^I_l(x,y)(x^2\log x)^l,
	\qquad 0\leqq I\leqq n,
\end{equation}
where each $u^I_l$ is a $C^\infty$-smooth function of the upper-half plane coordinates $(x,y)$.
Moreover, any partial derivative of $u^I$ (of any order) admits a similar
asymptotic expansion obtained by formally differentiating the right-hand side.
It follows from this that $u\in C^{2,\alpha}(\overline{D},\overline{B^{n+1}})$ for any $\alpha\in(0,1)$.
Note also that \eqref{eq:Li-Tam-lemma} implies
\begin{equation}
	\label{eq:Li-Tam-lemma-for-hyperbolic-disk}
	u^0_0(0,y)=\sqrt{e(\varphi)},\qquad
	u^i_0(0,y)=0,\qquad 1\leqq i\leqq n.
\end{equation}

The main result of this section is the following.

\begin{thm}
	\label{thm:smooth-dependence-of-harmonic-extension-on-hyperbolic-space}
	Let $\Phi\colon\bdry D\times U\to\bdry B^{n+1}$ be a $C^\infty$-smooth map,
	where $U\subset\mathbb{R}^l$ is an open set.
	We assume that each $\varphi_t=\Phi(\orddot,t)$ has nowhere vanishing differential,
	and let $u_t\in C^1(\overline{D},\overline{B^{n+1}})$ be the Li--Tam harmonic extension of $\varphi_t$,
	which is automatically $C^{2,\alpha}$ for any $\alpha\in(0,1)$.
	Then, for any $\alpha\in(0,1)$, $u_t$ is a $C^\infty$-smooth family of
	maps in $C^{2,\alpha}(\overline{D},\overline{B^{n+1}})$.
\end{thm}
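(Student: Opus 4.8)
The plan is to set up the Li--Tam harmonic extension as the solution of a boundary-value problem to which an implicit-function-theorem argument applies, the parameter $t$ entering only through the (smooth) Dirichlet data. Concretely, I would work in the upper-half plane model of $D$ near a boundary point and use the polyhomogeneous structure \eqref{eq:polyhomogeneous-expansion} as a motivation to change the unknown: writing $u_t=(u_t^0,\dots,u_t^n)$, I would separate out the leading behavior and regard the remainder as an element of a suitable weighted Hölder space on which the linearization of the harmonic-map operator is Fredholm and, in fact, invertible. The key is that the harmonic-map equation $\tau(u)=0$, which in the half-space coordinates has the form given by \eqref{eq:tension-field} with a single factor (since $m=1$ and $m_\lambda-1=0$), is a uniformly degenerate (``$0$-elliptic'') equation of the type treated in the theory of conformally compact Einstein metrics; its indicial roots off the boundary are computable, and the normalization \eqref{eq:Li-Tam-lemma-for-hyperbolic-disk} pins down the indicial data so that the relevant operator between the correct function spaces is an isomorphism. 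This is where the assumption $m=1$ is used decisively: only then are the indicial roots and the resulting $b$-calculus mapping properties arranged so that $C^{2,\alpha}(\overline D)$ — rather than a genuinely polyhomogeneous space — is the right codomain.

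With that functional-analytic framework in place, I would proceed as follows. First I would recall or reprove the linearization: at a harmonic map $u$, the Jacobi operator $L_u=\operatorname{tr}_g\nabla^2(\orddot)-(\text{curvature term of }B^{n+1})$ acting on sections of $u^*TB^{n+1}$, written in the trivialization coming from the upper-half space coordinates of $B^{n+1}$ as in the excerpt, is a uniformly degenerate elliptic operator, and I would identify its indicial roots along $\bdry D$ using the model operator obtained by freezing coefficients at the boundary; the nondegeneracy hypothesis $d\varphi\neq0$ (equivalently $e(\varphi)>0$) guarantees that $u$ is an immersion up to the boundary, so the coefficients are well-behaved and the indicial roots are as expected. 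Second, I would define a nonlinear map $F(v,t)=\tau\bigl(\text{map built from }\varphi_t\text{ and }v\bigr)$ on a neighborhood of $(0,t_0)$ in $X^{2,\alpha}\times U$, where $X^{2,\alpha}$ is the appropriate (weighted) Hölder space tailored to \eqref{eq:polyhomogeneous-expansion}, check that $F$ is $C^\infty$ in both arguments (smoothness in $t$ being immediate since $\Phi$ is $C^\infty$ and enters only through composition and through smooth boundary-defining data), check $F(0,t_0)=0$ for $v$ the Li--Tam extension of $\varphi_{t_0}$, and identify $D_vF(0,t_0)$ with $L_{u_{t_0}}$ on $X^{2,\alpha}$. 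Third, invoking the isomorphism property of $L_{u_{t_0}}$ on these spaces — this is the analytic heart, and I would cite the mapping theory for uniformly degenerate operators (Mazzeo, and the conformally compact Einstein literature referenced in the introduction) — I would apply the implicit function theorem in Banach spaces to obtain a $C^\infty$ family $t\mapsto v_t\in X^{2,\alpha}$ near $t_0$ with $F(v_t,t)=0$. By the uniqueness part of \cite{Li-Tam-93-I}*{Theorem 1.1}, the map reconstructed from $v_t$ is exactly the Li--Tam extension $u_t$, so $t\mapsto u_t$ is $C^\infty$ into $X^{2,\alpha}$, hence into $C^{2,\alpha}(\overline D,\overline{B^{n+1}})$; since $t_0\in U$ was arbitrary, this gives the global statement.

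The main obstacle is the second step as an analytic input rather than as a formal one: establishing that the Jacobi operator $L_{u_{t_0}}$ is an \emph{isomorphism} — not merely Fredholm — between the chosen spaces, with the precise weights dictated by \eqref{eq:polyhomogeneous-expansion} and \eqref{eq:Li-Tam-lemma-for-hyperbolic-disk}. Surjectivity requires ruling out cokernel, which amounts to an energy/maximum-principle argument exploiting the nonpositive curvature of the target (as in the computations of \cite{Jager-Kaul-79} already invoked in the proof of Theorem \ref{thm:uniqueness}), together with the fact that there is no room for decaying Jacobi fields consistent with a fixed boundary value. Injectivity is the statement that a Jacobi field along $u_{t_0}$ vanishing to the appropriate order at $\bdry D$ must vanish, which again follows from convexity of $d_{\mathbb{H}^{n+1}}$ composed with the relevant map, exactly as in the uniqueness theorem. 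A secondary technical point is to verify that ``$C^\infty$-smooth family in $C^{2,\alpha}$'' is the honest conclusion of the implicit function theorem here — i.e. that the nonlinear operator $F$ really is smooth, not merely continuous or $C^1$, as a map of Banach spaces; this is routine once the composition structure of $\tau$ in the half-space coordinates is written out, because all nonlinearities are rational in $u^0$ with $u^0$ bounded away from $0$ on compacta and with controlled boundary behavior, but it does need to be stated carefully. I would organize the write-up so that these two ingredients — the isomorphism property and the smoothness of $F$ — are isolated as lemmas, and then the theorem follows in one line from the implicit function theorem plus Li--Tam uniqueness.
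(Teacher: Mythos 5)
Your proposal follows essentially the same route as the paper: a preliminary extension plus a correction term in a weighted H\"older space, the implicit function theorem, and the isomorphism of the linearized (Jacobi) operator established via the uniformly degenerate/conformally compact machinery of Graham--Lee, Biquard, and Lee, with injectivity coming from the negative curvature of the target and the role of $m=1$ correctly identified (the paper phrases it as the local approximability of $u_0$ by the totally geodesic model $\breve u$, which is what makes the parametrix construction work). The argument is sound and matches the paper's proof in all essentials.
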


To prove this,
we use a technique similar to that of Graham--Lee \cite{Graham-Lee-91}
for deforming conformally compact Einstein metrics,
which is extended later by Biquard \cite{Biquard-00} and Lee \cite{Lee-06}.

Without loss of generality, we may assume $0\in U$ and it suffices to show that $u_t$ is smooth in $t$ near $0$.
We consider the vector bundle $E=u_0^*TB^{n+1}$ over $D$;
note that $E$ is equipped with the induced connection.
The tension field operator is reinterpreted below as an operator from the space of sections of $E$ to itself.

The space $L^2(E)$ of square-integrable sections and the $L^2$-Sobolev spaces $H^k(E)$ are defined as usual.
For any $k$ times continuously differentiable section $\sigma$ of $E$, we set
\begin{equation}
	\norm{\sigma}_{C^k}=\sum_{j=0}^k\sup\abs{\nabla^j\sigma}
\end{equation}
and, for any $\alpha\in(0,1)$,
\begin{equation}
	\norm{\sigma}_{C^{k,\alpha}}=\norm{\sigma}_{C^k}+
	\sup_{\substack{p,\,q\in D\\ p\not=q}}
	\frac{\abs{\Pi_{q\to p}(\nabla^k\sigma)(q)-(\nabla^k\sigma)(p)}}{d(p,q)^\alpha},
\end{equation}
where $d(p,q)$ is the hyperbolic distance and
$\Pi_{q\to p}$ denotes the parallel transport from $q$ to $p$
along the unique geodesic in $D$ passing through the two points.
The space $C^k(E)$ (resp.~$C^{k,\alpha}(E)$) consists of sections $\sigma$ for which
$\norm{\sigma}_{C^k}$ (resp.~$\norm{\sigma}_{C^{k,\alpha}}$) is finite.

\begin{rem}
	\label{rem:equivalent-Holder-norms}
	(1) Obviously, for any fixed $r>0$,
	\begin{equation}
		\norm{\sigma}_{C^k}+
		\sup_{\substack{p,\,q\in D\\ p\not=q,\,d(p,q)<r}}
		\frac{\abs{\Pi_{q\to p}(\nabla^k\sigma)(q)-(\nabla^k\sigma)(p)}}{d(p,q)^\alpha}
	\end{equation}
	is a norm equivalent to $\norm{\sigma}_{C^{k,\alpha}}$.

	(2) Another equivalent norm to $\norm{\orddot}_{C^{k,\alpha}}$ can be defined
	through local normal coordinates as follows.
	It is an immediate consequence from \eqref{eq:Li-Tam-lemma} or \eqref{eq:polyhomogeneous-expansion}
	that $\abs{du_0}$ is bounded.
	Therefore, for an arbitrarily fixed $R>0$, we can take an $r>0$ so that
	the image of any geodesic disk $D(p;r)\subset D$ of radius $r$
	is contained in the geodesic ball $B(u_0(p);R)\subset B^{n+1}$.
	We fix such an $r$, and
	let $\set{D_\lambda}$ be a uniformly locally finite covering of $D$ consisting of
	geodesic disks $D_\lambda=D(p_\lambda;r)$ such that any $p$, $q\in D$ with $d(p,q)<r/2$ is
	contained in some $D_\lambda$
	(such a covering can be found by letting $\set{p_\lambda}$ be a maximal set of points for which
	any two $D(p_\lambda;r/4)$ are disjoint; see Lee \cite{Lee-06}*{Lemma 2.2}).
	Then the existence of polyhomogeneous expansion of $u_0$ implies that, if $u_0$ is
	expressed locally as $u_0=(u_0^1,u_0^2,\dots,u_0^{n+1})$
	in the normal coordinates in $D(p_\lambda;r)$ and $B(u_0(p_\lambda);R)$,
	the functions $u_0^i$, $1\leqq i\leqq n+1$, are $C^\infty$-bounded uniformly in $\lambda$.
	Therefore,
	the Christoffel symbols of the induced connection on $E$ in these normal coordinates
	are also $C^\infty$-bounded uniformly in $\lambda$.
	As a consequence,
	if $s_\lambda\colon D_r\to\mathbb{R}^{n+1}$ denotes the local expression of $\sigma$
	in terms of the normal coordinates in $D(p_\lambda;r)$ and $B(u_0(p);R)$
	(where $D_r\subset\mathbb{R}^2$ is the disk of radius $r$ centered at the origin), then
	$\sup_\lambda\norm{s_\lambda}_{C^{k,\alpha}}$
	gives another equivalent norm to $\norm{\orddot}_{C^{k,\alpha}}$.
\end{rem}

Recall that the tension field of $u\colon D\to B^{n+1}$ is given by
\begin{equation}
	\tau(u)^k=
	\tensor{g}{^a^b}(\partial_a\partial_bu^k-\tensor{\Gamma}{^c_a_b}\partial_cu^k
	+(\tensor{\Gamma}{^k_i_j}\compose u)(\partial_au^i)(\partial_bu^j))
\end{equation}
in terms local coordinates,
where $g$ denotes the Poincar\'e metric on $D$.

A one-to-one correspondence between mappings $u\colon D\to B^{n+1}$ and
sections $\sigma\in\Gamma(E)$ can be defined through the exponential map:
\begin{equation}
	u(p)=\exp_{u_0(p)}\sigma(p).
\end{equation}
Since $u_0$ is $C^\infty$-smooth in $D$,
$u$ is locally $C^{k,\alpha}$ if and only if $\sigma$ is locally $C^{k,\alpha}$.
This correspondence is used to regard the tension field operator $\tau$
as acting on sections $\sigma$ rather than on $u$.
Moreover, by using the parallel translation from $u(p)$ to $u_0(p)$ along the geodesic in $B^{n+1}$
to interpret a section of $u^*TB^{n+1}$ as a section of $E=u_0^*TB^{n+1}$, we obtain
a smooth nonlinear mapping
\begin{equation}
	T\colon C^{k+2,\alpha}(E)\to C^{k,\alpha}(E);
\end{equation}
the fact that $\sigma\in C^{k+2,\alpha}(E)$ implies $T(\sigma)\in C^{k,\alpha}(E)$
can be easily seen by the equivalent norm
discussed in Remark \ref{rem:equivalent-Holder-norms} (2).

Let $\rho\in C^\infty(\overline{D})$ denote any boundary defining function.
We introduce the weighted H\"older spaces $C^{k,\alpha}_\delta(E)$ for any $\delta\in\mathbb{R}$ by
\begin{equation}
	C^{k,\alpha}_\delta(E)=\rho^\delta C^{k,\alpha}(E)
\end{equation}
with norm given by
\begin{equation}
	\norm{\sigma}_{C^{k,\alpha}_\delta}=\norm{\rho^{-\delta}\sigma}_{C^{k,\alpha}}.
\end{equation}
Then, noting the fact that $\partial_a\rho/\rho$ and its consecutive partial derivatives
in the normal coordinates are all bounded, we can deduce that
the tension field operator further gives a smooth mapping
\begin{equation}
	\label{eq:harmonic-map-Laplacian-on-weighted-Holder-spaces}
	T\colon C^{k+2,\alpha}_\delta(E)\to C^{k,\alpha}_\delta(E)
\end{equation}
for any $\delta\geqq 0$.

The linearization $P$ of $T$ at $0$ is given by
\begin{equation}
	\label{eq:local-formula-of-linearization-for-single-hyperbolic-plane}
	(P\sigma)^k
	=\tensor{g}{^a^b}
	(\partial_a\partial_b\sigma^k-\tensor{\Gamma}{^c_a_b}\partial_c\sigma^k
	+((\partial_l\tensor{\Gamma}{^k_i_j})\compose u_0)\sigma^l(\partial_au_0^i)(\partial_bu_0^j)
	+2(\tensor{\Gamma}{^k_i_j}\compose u_0)(\partial_au_0^i)(\partial_b\sigma^j)),
\end{equation}
which can also be expressed as
\begin{equation}
	\label{eq:Weizenbock-formula-for-linearization-for-single-hyperbolic-plane}
	(P\sigma)^k=-\nabla^*\nabla\sigma^k
	+\tensor{g}{^a^b}\tensor{R}{_i_j^k_l}\sigma^i(\partial_au_0^j)(\partial_bu_0^l).
\end{equation}
This gives a bounded linear operator
\begin{equation}
	\label{eq:linearization-for-single-hyperbolic-plane-on-weighted-Holder-spaces}
	P\colon C^{k+2,\alpha}_\delta(E)\to C^{k,\alpha}_\delta(E)
\end{equation}
for any $k\in\mathbb{Z}_{\geqq 0}$, $\alpha\in(0,1)$, $\delta\in\mathbb{R}$,
and a bounded linear operator
\begin{equation}
	\label{eq:linearization-for-single-hyperbolic-plane-on-Sobolev-spaces}
	P\colon H^2(E)\to L^2(E).
\end{equation}

If $du_0$ has rank greater than $1$ everywhere,
\eqref{eq:Weizenbock-formula-for-linearization-for-single-hyperbolic-plane} immediately implies that
there exists a constant $C>0$ such that
\begin{equation}
	\label{eq:fundamental-estimate}
	\norm{\sigma}_{L^2}\leqq C\norm{P\sigma}_{L^2}
\end{equation}
for any compactly supported section $\sigma\in C^\infty_c(E)$,
and hence for any $\sigma\in H^2(E)$ by continuity.
Consequently, \eqref{eq:linearization-for-single-hyperbolic-plane-on-Sobolev-spaces} has trivial kernel.
The same conclusion can be drawn as follows even if $du_0$ does not satisfy the above assumption
(cf.\ \cite{Lee-06}*{p.\ 78}).
If we suppose that $\sigma\in H^2(E)$ satisfies $P\sigma=0$,
then $\tensor{g}{^a^b}\tensor{R}{_i_j_k_l}\sigma^i\sigma^k(\partial_au_0^j)(\partial_bu_0^l)$
must vanish identically on $D$.
Choose a compact subset $K\subset D$ so that $\rank(du_0)_p>1$ for any $p\in D\setminus K$
(which is possible by \eqref{eq:Li-Tam-lemma}).
Then, $\sigma$ vanishes in $D\setminus K$,
and the weak unique continuation property \cite{Aronszajn-57}*{Remark 3} implies that $\sigma$ is
identically zero in $D$.

Additionally, no matter whether $\rank(du_0)_p>1$  everywhere or not,
the fact that \eqref{eq:fundamental-estimate} is satisfied by any $\sigma\in C^\infty_c(E)$ supported outside
some fixed compact subset $K\subset D$
implies that \eqref{eq:linearization-for-single-hyperbolic-plane-on-Sobolev-spaces} has
closed range (cf.\ \cite{Lee-06}*{Lemma 4.10}).
Moreover, if $\sigma\in(\ran P)^\perp$,
then $P\sigma=0$ in the sense of distribution since $P$ is formally self-adjoint.
The global elliptic regularity for $P$ (cf.\ \cite{Lee-06}*{Lemma 4.8}) implies that $\sigma\in H^2(E)$,
and hence $\sigma=0$.
This proves that \eqref{eq:linearization-for-single-hyperbolic-plane-on-Sobolev-spaces} is an isomorphism.

We claim the following.

\begin{prop}
	\label{prop:isomorphism-theorem-for-linearized-harmonic-map-Laplacian}
	For $-1<\delta<2$,
	\eqref{eq:linearization-for-single-hyperbolic-plane-on-weighted-Holder-spaces} is an isomorphism.
\end{prop}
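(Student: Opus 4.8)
The plan is to obtain the isomorphism on weighted Hölder spaces by combining the $L^2$-isomorphism already established for \eqref{eq:linearization-for-single-hyperbolic-plane-on-Sobolev-spaces} with indicial-root analysis and elliptic regularity, in the style of Graham--Lee and Lee. First I would identify the indicial roots of $P$ at the boundary. Since $u_0$ is the Li--Tam extension, $du_0$ has rank $1$ at $\bdry D$ (only the normal derivative survives by \eqref{eq:Li-Tam-lemma}), so the curvature term in \eqref{eq:Weizenbock-formula-for-linearization-for-single-hyperbolic-plane} contributes at the level of the indicial operator through a single rank-one term built from $\partial_x u_0$. A local computation in the upper-half plane coordinates, using that the hyperbolic Laplacian on $\mathbb{H}^2$ acting on a function behaving like $x^s$ gives indicial equation $s(s-1)=0$, shows that the indicial roots of the scalar part are $0$ and $1$; the curvature correction shifts the root in the direction of $\partial_x u_0$, and one checks that the relevant indicial polynomial has roots $-1$ and $2$ in that direction (consistent with the exponents $x$ and $x^3\log x$ appearing in \eqref{eq:polyhomogeneous-expansion}). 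Thus the open interval of weights on which $P$ is Fredholm of index zero with no indicial roots is exactly $(-1,2)$, which is why the proposition restricts $\delta$ to this range.

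With the indicial analysis in hand, the argument runs as follows. For $\delta=0$, Remark \ref{rem:equivalent-Holder-norms}(2) and the uniform $C^\infty$-bounds on the Christoffel symbols give interior Schauder estimates that are uniform across the cover $\{D_\lambda\}$; combined with the $L^2$-isomorphism \eqref{eq:linearization-for-single-hyperbolic-plane-on-Sobolev-spaces} and a boundary Schauder estimate near $\bdry D$ (available because $0\in(-1,2)$ is not an indicial root), one gets an a priori estimate $\norm{\sigma}_{C^{k+2,\alpha}}\leqq C\norm{P\sigma}_{C^{k,\alpha}}$. Injectivity on $C^{k+2,\alpha}_0(E)=C^{k+2,\alpha}(E)$ follows since any such $\sigma$ lies in $H^2(E)$ and $P$ is $L^2$-injective; surjectivity follows by solving $P\sigma=f$ in $H^2$ (using that $f\in C^{k,\alpha}\subset L^2$) and then upgrading regularity to $C^{k+2,\alpha}$ by the same Schauder bootstrap. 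This handles $\delta=0$. For general $\delta\in(-1,2)$ one conjugates: $P_\delta:=\rho^{-\delta}P\rho^\delta$ is a bounded perturbation of $P$ by a first-order operator with coefficients that are $O(\rho)$ relative to the normal vector field (since $\partial_a\rho/\rho$ and its derivatives are bounded), so $P_\delta$ has the same indicial roots as $P$ and the same interval of invertibility; since $P_\delta\colon C^{k+2,\alpha}(E)\to C^{k,\alpha}(E)$ is invertible by the parametrix construction and the fact that $\delta$ avoids the indicial set, \eqref{eq:linearization-for-single-hyperbolic-plane-on-weighted-Holder-spaces} is an isomorphism.

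The main obstacle I anticipate is the boundary regularity and a priori estimate near $\bdry D$: one must show that a solution $\sigma\in H^2(E)$ of $P\sigma=f$ with $f\in C^{k,\alpha}_\delta(E)$ actually lies in $C^{k+2,\alpha}_\delta(E)$, with quantitative control. This is where the rank drop of $du_0$ at the boundary and the logarithmic terms in \eqref{eq:polyhomogeneous-expansion} intervene: the coefficients of $P$ in the normal coordinates are only polyhomogeneous, not smooth, up to the boundary, so one cannot directly invoke standard edge/0-calculus isomorphism theorems for smooth conformally compact metrics. The resolution is that the $(x^2\log x)^l$ terms appear at order $x^3$ and higher, hence do not affect the indicial operator, and the boundary Schauder theory goes through with polyhomogeneous coefficients provided $\delta$ is not an indicial root; I would quote Lee \cite{Lee-06}*{Lemma 4.8} for the global elliptic regularity and adapt the boundary estimate, noting that the needed uniformity is exactly that furnished by Remark \ref{rem:equivalent-Holder-norms}(2). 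A secondary point to verify carefully is that the curvature term $\tensor{g}{^a^b}\tensor{R}{_i_j^k_l}\sigma^i(\partial_au_0^j)(\partial_bu_0^l)$, though not sign-definite in general, is genuinely lower-order at the boundary (it is $O(1)$ as a bundle endomorphism after the rescaling by $x^2$ built into $g^{ab}$), so that it enters the indicial computation only through its boundary value along $\partial_x u_0$ and does not spoil Fredholmness on $(-1,2)$.
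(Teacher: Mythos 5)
Your overall architecture---Fredholm theory on weighted H\"older spaces via indicial analysis and a parametrix, with injectivity reduced to the triviality of the $L^2$-kernel---matches the paper's, but the concrete steps you use to produce the interval $(-1,2)$ would fail. First, the indicial computation is internally inconsistent: you assign roots $0$ and $1$ to the ``scalar part'' and roots $-1$ and $2$ only to the direction of $\partial_xu_0$, and then conclude that the admissible interval is $(-1,2)$. If any component genuinely had indicial roots $0$ and $1$, the operator could be invertible at best for $\delta\in(0,1)$, contradicting the statement. In fact the zeroth-order curvature term together with the pullback Levi-Civita connection of the target (the coordinate frame $\partial_\xi,\partial_{\eta^i}$ is not parallel and has norm $\xi^{-1}$) shifts \emph{every} component: the paper's explicit computation for the model map gives $-(x\partial_x)(x\partial_x-3)$ in each coordinate component, i.e.\ roots $-1$ and $2$ uniformly once the $x^{-1}$ normalization of the frame is accounted for. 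Second, the conjugation step is wrong: $P_\delta=\rho^{-\delta}P\rho^\delta$ does \emph{not} have the same indicial roots as $P$; conjugation shifts them by $\delta$ (e.g.\ $\rho^{-\delta}(x\partial_x)(x\partial_x-3)\rho^{\delta}=(x\partial_x+\delta)(x\partial_x-3+\delta)$). If your claim were correct, invertibility at $\delta=0$ would propagate to every real $\delta$, which is absurd; the bound $-1<\delta<2$ exists precisely because the shifted roots must straddle the reference weight. A smaller but real slip: $C^{k,\alpha}(E)\not\subset L^2(E)$, since $D$ has infinite hyperbolic volume, so your surjectivity argument ``solve $P\sigma=f$ in $H^2$ and upgrade'' does not start.

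The missing idea is a usable boundary model. Because $u_0$ is only polyhomogeneous and $du_0$ drops rank at $\bdry D$, $P$ is not a geometric elliptic operator with smooth coefficients in Lee's sense, so one cannot simply ``adapt the boundary Schauder estimate.'' The paper's device is to normalize coordinates using \eqref{eq:Li-Tam-lemma-for-hyperbolic-disk} and \eqref{eq:polyhomogeneous-expansion} so that near each boundary point $u_0$ osculates the totally geodesic embedding $\breve u(x,y)=(x,y,0,\dots,0)$, to compute the linearization $\breve P$ at $\breve u$ explicitly, to estimate its Green's kernel by $\abs{G(p)}\leqq C_\varepsilon\rho(p)^{2-\varepsilon}$, and then to assemble a parametrix for $P$ by patching inverses of $\breve P$. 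That explicit model computation is what actually yields the roots $-1$ and $2$ in all directions and hence the interval $(-1,2)$; your proposal asserts the interval but contains no substitute for this step.
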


\begin{proof}
	We argue along the lines of \cite{Lee-06}*{Chapters 4--6}.
	Although $P$ is not a geometric elliptic differential operator in the sense of \cite{Lee-06},
	the same method can be applied, thanks to the fact that
	$u_0$ can be locally approximated well by a certain standard mapping that we write $\breve{u}$ below.
	This is a special feature of the case $m=1$.

	Let $\breve{u}\colon D\to B^{n+1}$ denote the totally geodesic isometric embedding of the hyperbolic plane
	that, in terms of the upper-half space coordinates, is given by
	\begin{equation}
		\breve{u}(x,y)=(x,y,0,\dots,0).
	\end{equation}
	Let $\breve{E}=\breve{u}^*TB^{n+1}$ and $\breve{P}$ the linearized tension field operator
	\eqref{eq:Weizenbock-formula-for-linearization-for-single-hyperbolic-plane} associated with $\breve{u}$.
	Then, $\breve{P}\colon H^2(\breve{E})\to L^2(\breve{E})$ is an isomorphism as we have already seen.
	We introduce the Green's kernel of $\breve{P}$, which we write $G(p_1,p_2)$, $p_1$, $p_2\in D$,
	as the Schwartz kernel for $\breve{P}^{-1}$.
	It is $C^\infty$-smooth away from the diagonal,
	and if we write $G(p)=G(0,p)\in\Hom(\breve{E}_0,\breve{E}_p)$, where $0\in D$ is the center,
	then its asymptotic behavior as $\rho(p)\to 0$ can be
	determined as in \cite{Lee-06}*{Proposition 5.2} based on the following explicit formula of $\breve{P}$:
	in the upper-half space coordinates, we have
	\begin{align}
		(\breve{P}\sigma)^0&=-(x\partial_x)(x\partial_x-3)\sigma^0-x^2\partial_y^2\sigma^0-2x\partial_y\sigma^1,\\
		(\breve{P}\sigma)^1&=-(x\partial_x)(x\partial_x-3)\sigma^1-x^2\partial_y^2\sigma^1+2x\partial_y\sigma^0,\\
		(\breve{P}\sigma)^i&=-(x\partial_x)(x\partial_x-3)\sigma^i-x^2\partial_y^2\sigma^i,\qquad 2\leqq i\leqq n.
	\end{align}
	From this, it follows that $G(p)$ satisfies, for any $\varepsilon>0$,
	\begin{equation}
		\abs{G(p)}\leqq C_\varepsilon\rho(p)^{2-\varepsilon}.
	\end{equation}
	It is a consequence from this estimate that $\breve{P}$ induces an isomorphism
	$\breve{P}\colon C^{k+2,\alpha}_\delta(\breve{E})\to C^{k,\alpha}_\delta(\breve{E})$
	for any $k$, $\alpha\in (0,1)$, and $-1<\delta<2$ (cf.~\cite{Lee-06}*{Theorem 5.9}).

	From now, we turn to the operator $P$ for general $u_0$.
	In view of \eqref{eq:polyhomogeneous-expansion} and \eqref{eq:Li-Tam-lemma-for-hyperbolic-disk},
	for each point $q\in\bdry D$,
	we can take an upper-half plane coordinates $(V;x,y)$ centered around $q\in\bdry D$ and
	an upper-half space coordinates $(U;\xi,\eta^1,\eta^2,\dots,\eta^n)$ centered around $u_0(q)\in\bdry B^{n+1}$
	for which $u_0(V)\subset U$ and
	\begin{align}
		u_0^0(x,y)&=x\psi^0(y)+\dotsb,\\
		u_0^1(x,y)&=\varphi^1(y)+x\psi^1(y)+\dotsb,\\
		u_0^i(x,y)&=\varphi^i(y)+x\psi^i(y)+\dotsb,\qquad 2\leqq i\leqq n,
	\end{align}
	with
	\begin{equation}
		\varphi^1(0)=1,\qquad
		\varphi^2(0)=\dots=\varphi^n(0)=0,
	\end{equation}
	and
	\begin{equation}
		\psi^0(0)=1,\qquad
		\psi^1_1(0)=\psi^2_1(0)=\dots=\psi^n_1(0)=0.
	\end{equation}
	Using these coordinates, we may interpret
	any section $\sigma\in C^{k,\alpha}_\delta(E)$ supported in $V$ as a section in $C^{k,\alpha}_\delta(\breve{E})$.
	By taking an appropriate covering of a neighborhood of $\bdry D$ that consists of such $V$,
	we can construct, following the method in \cite{Lee-06}*{Chapter 6},
	a parametrix $Q\colon L^2(E)\to H^2(E)$ to the operator $P\colon H^2(E)\to L^2(E)$.
	This $Q$ naturally induces a bounded operator
	\begin{equation}
		Q\colon C^{k,\alpha}_\delta(E)\to C^{k+2,\alpha}_\delta(E),\qquad -1<\delta<2,
	\end{equation}
	which turns out to be a parametrix to $P\colon C^{k+2,\alpha}_\delta(E)\to C^{k,\alpha}_\delta(E)$.
	Therefore, $P\colon C^{k+2,\alpha}_\delta(E)\to C^{k,\alpha}_\delta(E)$ is a Fredholm operator,
	and it can be checked that its index is zero.
	Furthermore, the careful construction of the parametrix $Q$ shows that
	the kernel of $P\colon C^{k+2,\alpha}_\delta(E)\to C^{k,\alpha}_\delta(E)$
	can be identified with that of $P\colon H^2(E)\to L^2(E)$.
	However, we know that $P\colon H^2(E)\to L^2(E)$ has trivial kernel.
	This proves that so is
	\eqref{eq:linearization-for-single-hyperbolic-plane-on-weighted-Holder-spaces}.
\end{proof}

To show Theorem \ref{thm:smooth-dependence-of-harmonic-extension-on-hyperbolic-space},
by following the procedure of \cite{Li-Tam-93-I}*{Lemma 2.2},
we take an extension $\tilde{u}_t\in C^\infty(\overline{D},\overline{B^{n+1}})$ of $\varphi_t$
for each $t$, $C^\infty$-smoothly in $t$, as a preliminary extension in such a way that
$\tilde{u}_0=u_0$ and
\begin{equation}
	\abs{\tau(\tilde{u}_t)}=O(\rho^{1+\alpha})
\end{equation}
is satisfied.
For a sufficiently small $\sigma\in C^{k+2,\alpha}(E)$,
we express as
\begin{equation}
	\tilde{u}_t=(\tilde{u}^1_t,\tilde{u}^2_t,\dots,\tilde{u}^{n+1}_t),\qquad
	\sigma=(\sigma^1,\sigma^2,\dots,\sigma^{n+1})
\end{equation}
using the Euclidean coordinates in $B^{n+1}$
to define the mapping $\tilde{u}_t+\sigma\colon D\to B^{n+1}$ by
\begin{equation}
	(\tilde{u}_t+\sigma)^i=\tilde{u}^i_t+\sigma^i.
\end{equation}
Using this, we consider the mapping
\begin{equation}
	\mathcal{Q}\colon U\times C^{2,\alpha}_{1+\alpha}(E)\to C^{0,\alpha}_{1+\alpha}(E),
	\qquad
	(t,\sigma)\mapsto T(\tilde{u}_t+\sigma).
\end{equation}
Then, Proposition \ref{prop:isomorphism-theorem-for-linearized-harmonic-map-Laplacian} and
the implicit function theorem shows that, by shrinking $U$ if necessary,
we can find $\sigma_t\in C^{2,\alpha}_{1+\alpha}(E)$ for each $t\in U$,
$C^\infty$-smoothly in $t$, such that $\mathcal{Q}(\tilde{u}_t,\sigma_t)=T(\tilde{u}_t+\sigma_t)=0$
and hence $\tau(\tilde{u}_t+\sigma_t)=0$.
An argument similar to \cite{Lee-06}*{Lemma 3.7} shows that
each $\tilde{u}_t+\sigma_t$ lies in $C^{2,\alpha}(\overline{D},\overline{B^{n+1}})$
and is $C^\infty$-smooth in $t$ as a family of elements of $C^{2,\alpha}(\overline{D},\overline{B^{n+1}})$.
This completes the proof of Theorem \ref{thm:smooth-dependence-of-harmonic-extension-on-hyperbolic-space}.

For later use, we further remark that, if $0\leqq\beta\leqq\alpha$, then
\begin{equation}
	\rho^{-2-\beta}\nabla^k\sigma_t\in C^{2-k,\alpha}_{-1+\alpha-\beta}((T^*D)^{\otimes k}\otimes E)
\end{equation}
for $k=0$, $1$, $2$,
which implies that
\begin{equation}
	\rho^{-2-\beta}\sigma_t^i,\qquad
	\rho^{-1-\beta}\partial_a\sigma_t^i,\qquad
	\rho^{-\beta}\partial_a\partial_b\sigma_t^i
\end{equation}
are all in $C^{\alpha-\beta}(\overline{D})$, where the partial derivatives are taken with respect to
the Euclidean coordinates in $D$.

\section{Construction of an approximate solution}
\label{sec:approx-solution}

By Theorem \ref{thm:smooth-dependence-of-harmonic-extension-on-hyperbolic-space},
for the initial extension $\tilde{\varphi}\colon\bdry X\to\overline{B^{n+1}}$
of $\varphi\colon\cornerX\to\bdry B^{n+1}$
defined in Definition \ref{dfn:initial-extension-to-topological-boundary},
$\tilde{\varphi}(\orddot,q_2)$
(resp.~$\tilde{\varphi}(q_1,\orddot)$) is $C^\infty$-smooth in $q_2$ (resp.~in $q_1$).
We construct an approximate harmonic map $u\colon X=D\times D\to B^{n+1}$
by extending $\tilde{\varphi}$ in this section.

In this dimension, the tension field formula \eqref{eq:tension-field} reads
\begin{subequations}
	\label{eq:tension-field-for-bidisk}
	\begin{align}
		\label{eq:tension-field-for-bidisk-normal}
		\frac{1}{u^0}\tau^0
		&=\sum_{\lambda=1}^{2}
		x_\lambda^2
		\left(\frac{1}{u^0}\Delta_{\overline{g}_\lambda}u^0
		-\frac{1}{(u^0)^2}\left(\abs{\nabla_{\overline{g}_\lambda}u^0}_{\overline{g}_\lambda}^2
		-\sum_{i=1}^n\abs{\nabla_{\overline{g}_\lambda}u^i}_{\overline{g}_\lambda}^2\right)\right),\\
		\label{eq:tension-field-for-bidisk-tangential}
		\frac{1}{u^0}\tau^i
		&=\sum_{\lambda=1}^{2}
		x_\lambda^2
		\left(\frac{1}{u^0}\Delta_{\overline{g}_\lambda}u^i
		-\frac{1}{(u^0)^2}
		\braket{\nabla_{\overline{g}_\lambda}u^0,\nabla_{\overline{g}_\lambda}u^i}_{\overline{g}_\lambda}\right),
		\qquad 1\leqq i\leqq n.
	\end{align}
\end{subequations}

The next proposition illustrates the main idea for our construction of an approximate solution
for the harmonic map equation.

Let $\rho$ be any fixed boundary defining function of $D$. We set
\begin{equation}
	Q_{\rho_0}=\set{0<\rho<\rho_0}\times\set{0<\rho<\rho_0}\subset X
\end{equation}
and
\begin{equation}
	\overline{Q_{\rho_0}}=\set{0\leqq\rho<\rho_0}\times\set{0\leqq\rho<\rho_0}\subset\overline{X}.
\end{equation}
Moreover, we write $\rho_\lambda=\rho\compose\pi_\lambda$, $\lambda=1$, $2$,
where $\pi_\lambda\colon X\to D$ is the projection to the $\lambda$-th factor.

\begin{prop}
	\label{prop:sufficient-conditions-of-approximate-solution}
	Let $\alpha\in(0,1)$.
	Suppose that $u\in C^{2,\alpha}(\overline{X},\overline{B^{n+1}})$ satisfies
	\eqref{eq:properness-at-corner}, \eqref{eq:nondegeneracy}, and $u|_{\bdry X}=\tilde{\varphi}$.
	Then, the pointwise norm of its tension field satisfies
	$\abs{\tau(u)}=O(\rho_1^{\alpha/2}\rho_2^{\alpha/2})$ near $\bdry X$.
\end{prop}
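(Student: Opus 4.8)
The plan is to estimate $\tau(u)$ by examining each summand $x_\lambda^2(\cdots)$ in \eqref{eq:tension-field-for-bidisk} separately and showing it is $O(\rho_1^{\alpha/2}\rho_2^{\alpha/2})$ near $\bdry X$. By symmetry it suffices to treat the $\lambda=1$ term. Away from the corner $\cornerX$ we know more: restricting $u$ to a slice $\{q_1\}\times D$ or $D\times\{q_2\}$ should make the relevant one-variable expression \emph{exactly} the tension field of a Li--Tam harmonic map (which vanishes), so the $\lambda=1$ term should vanish identically on $D\times\bdry D$. The idea is to exploit $C^{2,\alpha}$-regularity up to $\overline X$ together with this vanishing to extract the product decay: a $C^{0,\alpha}$ function on $\overline X$ that vanishes on $D\times\bdry D$ (i.e.\ where $\rho_2=0$) is $O(\rho_2^\alpha)$ there, and symmetrically $O(\rho_1^\alpha)$ near $D\times\bdry D$ after dividing out; combining and taking geometric means (the square-root of the product of two separate $O(\rho_1^\alpha)$ and $O(\rho_2^\alpha)$ bounds) yields the stated $O(\rho_1^{\alpha/2}\rho_2^{\alpha/2})$.

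\textbf{Step 1.} First I would make precise the claim that the $\lambda=1$ term in \eqref{eq:tension-field-for-bidisk-normal} and \eqref{eq:tension-field-for-bidisk-tangential}, call it $\mathsf{A}^0_1(u)$ and $\mathsf{A}^i_1(u)$, vanishes on $D\times\bdry D$. Fix $q_2\in\bdry D$; the coefficient $x_1^2$ and the operator $\Delta_{\overline g_1}$, $\nabla_{\overline g_1}$ involve only the first-factor Euclidean coordinates, so $\mathsf{A}_1(u)|_{D\times\{q_2\}}$ is literally the quantity $\tfrac1{u^0}\tau^0$ for the one-hyperbolic-plane harmonic-map equation applied to $u(\orddot,q_2)=\tilde\varphi(\orddot,q_2)$. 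By Definition \ref{dfn:initial-extension-to-topological-boundary} this slice is a Li--Tam harmonic extension, so its tension field is zero; hence $\mathsf{A}_1(u)=0$ on all of $D\times\bdry D$. Symmetrically $\mathsf{A}_2(u)=0$ on $\bdry D\times D$.

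\textbf{Step 2.} Next I would quantify the regularity of $\mathsf{A}_1(u)$. Since $u\in C^{2,\alpha}(\overline X,\overline{B^{n+1}})$ and $x_1$ is comparable to $\rho_1$, the expression $\mathsf A_1^0(u)= x_1^2\bigl(u^0\Delta_{\overline g_1}u^0-\abs{\nabla_{\overline g_1}u^0}^2+\sum_i\abs{\nabla_{\overline g_1}u^i}^2\bigr)/(u^0)^2$ is $C^{0,\alpha}$ on $\overline X\setminus\cornerX$, and in fact globally $C^{0,\alpha}$ on $\overline{Q_{\rho_0}}$ after using the asymptotics $u^0\sim a_1(y_1)x_1+a_2(y_2)x_2$ from Proposition \ref{prop:Li-Tam-asymptotics} to see that the apparent singularity from $1/(u^0)^2$ is cancelled by the $x_1^2$ factor uniformly as $(x_1,x_2)\to(0,0)$ — here the nondegeneracy \eqref{eq:nondegeneracy} keeps $a_\lambda$ bounded away from zero. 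A $C^{0,\alpha}$ function $f$ on $\overline{Q_{\rho_0}}$ vanishing on $\{\rho_2=0\}$ satisfies $\abs{f}\le C\rho_2^\alpha$; applying this to $f=\mathsf A_1(u)$ gives $\abs{\mathsf A_1(u)}\le C\rho_2^\alpha$. But $\mathsf A_1(u)$ is bounded outright, so trivially also $\abs{\mathsf A_1(u)}\le C$; interpolating, $\abs{\mathsf A_1(u)}\le C\rho_2^{\alpha}$ near $D\times\bdry D$ is what we directly get, and we need an extra $\rho_1$-gain. This is where the real work sits.

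\textbf{Step 3 (the main obstacle).} The $\rho_1$-gain. The crude bound gives $\abs{\mathsf A_1(u)}=O(\rho_2^\alpha)$ but no decay in $\rho_1$; symmetrically $\abs{\mathsf A_2(u)}=O(\rho_1^\alpha)$. Since $\tau(u)=u^0(\mathsf A_1+\mathsf A_2)$ componentwise, we'd only conclude $\abs{\tau(u)}=O(\rho_1^\alpha+\rho_2^\alpha)$, which does \emph{not} vanish at the topological boundary, contradicting what we expect. The resolution must be that each $\mathsf A_\lambda(u)$ enjoys decay in \emph{both} variables. I would obtain the missing $\rho_1$-decay of $\mathsf A_1(u)$ by a finer analysis of the $x_1$-expansion: using the remark at the end of Section \ref{sec:smooth-dependence-on-boundary-values} (that $\rho^{-2-\beta}\sigma_t^i$, $\rho^{-1-\beta}\partial_a\sigma_t^i$, $\rho^{-\beta}\partial_a\partial_b\sigma_t^i$ are $C^{\alpha-\beta}$), together with the polyhomogeneous expansion \eqref{eq:polyhomogeneous-expansion} in the first variable applied slicewise and its $C^\infty$-dependence on $q_2\in\bdry D$, one sees that $\mathsf A_1(u)$ — which vanishes at $x_2=0$ to order $\rho_2^\alpha$ — \emph{also} vanishes at $x_1=0$ to order $\rho_1^\alpha$ because the one-variable tension field it encodes vanishes to all orders along $x_1=0$ for each fixed $q_2$ and the error introduced by the nonzero $\partial_{x_2}$-data is itself controlled by the $O(\rho^{1+\alpha})$-type estimates on $\tilde u_t+\sigma_t$ built in Section \ref{sec:smooth-dependence-on-boundary-values}. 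Concretely: $\mathsf A_1(u)$ is $C^{0,\alpha}$, vanishes on $\{\rho_1=0\}\cup\{\rho_2=0\}$, hence $\abs{\mathsf A_1(u)}\le C\min(\rho_1,\rho_2)^\alpha\le C\rho_1^{\alpha/2}\rho_2^{\alpha/2}$.

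\textbf{Step 4.} Finally I would assemble: $\abs{\tau(u)}^2=\sum_{k}(\tfrac1{u^0}\tau^k)^2=\sum_k(\mathsf A_1^k(u)+\mathsf A_2^k(u))^2\le C(\rho_1^{\alpha/2}\rho_2^{\alpha/2})^2$ near $\bdry X$ by Step 1 applied in both factors and the bound of Step 3, whence $\abs{\tau(u)}=O(\rho_1^{\alpha/2}\rho_2^{\alpha/2})$. The only subtle points are the uniform control of the $1/(u^0)^2$ singularity via Proposition \ref{prop:Li-Tam-asymptotics} and \eqref{eq:nondegeneracy}, and the joint vanishing in Step 3; everything else is bookkeeping with Hölder norms on the product $\overline{Q_{\rho_0}}$.
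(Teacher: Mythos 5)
Your Steps 1--2 and the overall architecture (slice harmonicity gives vanishing of the $\lambda=1$ bracket on $\{x_2=0\}$, hence an $O(x_2^\alpha)$ bound by H\"older continuity; then combine the two terms) match the paper's proof, and you correctly identify the $\rho_1$-gain as the crux. But Step 3, as written, has a genuine gap. You assert that $\mathsf{A}_1(u)=\frac{x_1^2}{(u^0)^2}F$, with $F=u^0\Delta_{\overline{g}_1}u^0-\bigl(\abs{\nabla_{\overline{g}_1}u^0}^2-\sum_i\abs{\nabla_{\overline{g}_1}u^i}^2\bigr)$, is $C^{0,\alpha}$ up to the corner and vanishes on $\{\rho_1=0\}\cup\{\rho_2=0\}$. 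Neither half of the mechanism you offer supports this. The bracket $F$ does \emph{not} vanish on $\{x_1=0\}$: the Li--Tam extension in the second variable says nothing about first-factor derivatives there, and the ``vanishing to all orders along $x_1=0$'' you invoke is a property of the slices at $q_2\in\bdry D$ (i.e.\ on $\{x_2=0\}$), not on $\{x_1=0\}$. The vanishing of $\mathsf{A}_1(u)$ on $\bdry D\times D$ comes solely from the explicit prefactor $x_1^2$ together with $u^0>0$ off the corner, and this degenerates as you approach the corner because $(u^0)^{-2}$ blows up at the same rate. Moreover the prefactor $x_1^2/(u^0)^2\sim x_1^2/(a_1x_1+a_2x_2)^2$ is bounded but \emph{discontinuous} at the corner (its limit depends on the direction of approach), so the claim that $\mathsf{A}_1(u)\in C^{0,\alpha}(\overline{Q_{\rho_0}})$ is unjustified; without it, the implication ``vanishes on both walls $\Rightarrow O(\min(\rho_1,\rho_2)^\alpha)$'' does not follow.

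The inequality you want is nevertheless true, and the paper obtains it by elementary power counting rather than by H\"older regularity of $\mathsf{A}_1(u)$: from $u^0>\frac{a_1}{2}x_1+\frac{a_2}{2}x_2$ (valid near the corner by Proposition \ref{prop:Li-Tam-asymptotics} and \eqref{eq:nondegeneracy}) one has $x_\lambda/u^0\leqq 2a_\lambda^{-1}$, and then
\begin{equation}
	\frac{x_1^2}{(u^0)^2}\cdot O(x_2^\alpha)
	=\left(\frac{x_1}{u^0}\right)^{2-\alpha/2}\left(\frac{x_2}{u^0}\right)^{\alpha/2}\cdot O(x_1^{\alpha/2}x_2^{\alpha/2})
	=O(x_1^{\alpha/2}x_2^{\alpha/2}),
\end{equation}
i.e.\ the missing $x_1$-decay is extracted by redistributing powers of $u^0$ between the two factors, not by any additional vanishing of $F$ at $x_1=0$. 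If you replace your Step 3 with this computation (and add the easy separate treatment of $X\setminus Q_{\rho_0}$, where $u^0$ is bounded below and the argument is immediate), your proof becomes the paper's.
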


\begin{proof}
	It suffices to prove the claim in $Q_{\rho_0}$ for some $\rho_0>0$.
	In fact, note that the image of $X\setminus Q_{\rho_0}$ is uniformly away from $\bdry B^{n+1}$
	by the condition \eqref{eq:properness-at-corner}.
	For any fixed $(q_1,p_2)\in \bdry D\times\set{\rho\geqq\rho_0}$,
	introduce upper-half plane coordinates around $q_1$ and around $p_2$
	to use formula \eqref{eq:tension-field-for-bidisk} for the tension field.
	Then, since $u(q_1,\orddot)=\tilde{\varphi}(q_1,\orddot)\colon D\to B^{n+1}$ is a harmonic map,
	the $\lambda=2$ term in the right-hand side of each of
	\eqref{eq:tension-field-for-bidisk-normal} and \eqref{eq:tension-field-for-bidisk-tangential}
	vanishes when $x_1=0$, which implies that this term is $O(x_1^\alpha)$ in a neighborhood of $(q_1,p_2)$.
	On the other hand, the $\lambda=1$ terms in
	\eqref{eq:tension-field-for-bidisk-normal} and \eqref{eq:tension-field-for-bidisk-tangential}
	are clearly $O(x_1^2)$.
	Therefore, we have $\abs{\tau(u)}=O(\rho_1^\alpha)=O(\rho_1^\alpha\rho_2^\alpha)$
	in a neighborhood of $(q_1,p_2)$.
	As the same argument also works in a neighborhood of $(p_1,q_2)\in \set{\rho\geqq\rho_0}\times\bdry D$,
	we may conclude that $\abs{\tau(u)}=O(\rho_1^\alpha\rho_2^\alpha)$ in $X\setminus Q_{\rho_0}$.

	To show that $\abs{\tau(u)}=O(\rho_1^{\alpha/2}\rho_2^{\alpha/2})$ in $Q_{\rho_0}$,
	it suffices to take any $(q_1,q_2)\in\cornerX$ and argue locally around this point
	by using the upper-half plane coordinates $(x_1,y_1,x_2,y_2)$ defined near $(q_1,q_2)$.

	Let us write $u=(u^0,u^1,\dots,u^n)$ in the upper-half space coordinates.
	Then, as in the proof of Proposition \ref{prop:dimension-restriction},
	we have
	\begin{equation}
		u^0=a_1x_1+a_2x_2+O(x_1^2,x_1x_2,x_2^2)
	\end{equation}
	in a neighborhood of $(q_1,q_2)$,
	where $a_\lambda=a_\lambda(y_1,y_2)=\sqrt{e_\lambda(\varphi)}$, $\lambda=1$, $2$.

	Since $u(\orddot,q'_2)\colon D\to B^{n+1}$ is harmonic for each $q'_2\in\bdry D$,
	\begin{equation}
		\label{eq:conclusion-from-boundary-harmonicity}
		u^0\Delta_{\overline{g}_1}u^0
		-\left(\abs{\nabla_{\overline{g}_1}u^0}^2-\sum_{i=1}^n\abs{\nabla_{\overline{g}_1}u^i}^2\right)
	\end{equation}
	vanishes at any point of the form $(x_1,y_1,0,y_2)$.
	Therefore, we can deduce that
	\begin{equation}
		\label{eq:part-of-tension-field-decay-1}
		u^0\Delta_{\overline{g}_1}u^0
		-\left(\abs{\nabla_{\overline{g}_1}u^0}^2-\sum_{i=1}^n\abs{\nabla_{\overline{g}_1}u^i}^2\right)
		=O(x_2^\alpha)
	\end{equation}
	in a neighborhood of $(q_1,q_2)$. Similarly, we obtain
	\begin{equation}
		\label{eq:part-of-tension-field-decay-2}
		u^0\Delta_{\overline{g}_2}u^0
		-\left(\abs{\nabla_{\overline{g}_2}u^0}^2-\sum_{i=1}^n\abs{\nabla_{\overline{g}_2}u^i}^2\right)
		=O(x_1^\alpha).
	\end{equation}
	Note moreover that a neighborhood of $(q_1,q_2)$ can be taken so small that
	\begin{equation}
		\label{eq:estimate-from-below-near-corner}
		u^0>\frac{a_1}{2}x_1+\frac{a_2}{2}x_2
	\end{equation}
	is satisfied there.
	In a neighborhood taken this way, by \eqref{eq:part-of-tension-field-decay-1}
	and \eqref{eq:part-of-tension-field-decay-2},
	\begin{equation*}
		\begin{split}
			\frac{1}{u^0}\tau^0
			&=\frac{1}{(u^0)^2}x_1^2\cdot O(x_2^\alpha)+\frac{1}{(u^0)^2}x_2^2\cdot O(x_1^\alpha)\\
			&=\left(\frac{x_1}{u^0}\right)^{2-\alpha/2}\left(\frac{x_2}{u^0}\right)^{\alpha/2}
			\cdot O(x_1^{\alpha/2}x_2^{\alpha/2})
			+\left(\frac{x_2}{u^0}\right)^{2-\alpha/2}\left(\frac{x_1}{u^0}\right)^{\alpha/2}
			\cdot O(x_1^{\alpha/2}x_2^{\alpha/2}).
		\end{split}
	\end{equation*}
	By \eqref{eq:estimate-from-below-near-corner},
	we have $x_1/u^0<2a_1^{-1}$ and $x_2/u^0<2a_2^{-1}$,
	from which it follows that $\tau^0/u^0=O(x_1^{\alpha/2}x_2^{\alpha/2})$.
	In the same manner, $\tau^i/u^0=O(x_1^{\alpha/2}x_2^{\alpha/2})$ can be shown.
	Therefore, the estimate $\abs{\tau(u)}=O(x_1^{\alpha/2}x_2^{\alpha/2})$ holds in a neighborhood $(q_1,q_2)$.
\end{proof}

Note that the computation in the above proof fails unless $m_1=m_2=1$.
For example, if $m_1\not=1$, \eqref{eq:conclusion-from-boundary-harmonicity} contains an additional term
$(m_1-1)x_1^{-1}u^0\partial_{x_1}u^0$, which is not continuous up to the boundary of $\overline{X}$ and
destroys the subsequent argument.

\begin{prop}
	\label{prop:approx-solution}
	For any $C^\infty$ map $\varphi\colon\cornerX\to\bdry B^{n+1}$
	satisfying \eqref{eq:nondegeneracy},
	we may construct a map $u\colon\overline{X}\to\overline{B^{n+1}}$
	satisfying \eqref{eq:properness-at-corner} and $u|_\bdryX=\tilde{\varphi}$
	so that $u\in C^{2,\alpha}(\overline{X},\overline{B^{n+1}})$ for any $\alpha\in(0,1)$.
	By Proposition \ref{prop:sufficient-conditions-of-approximate-solution},
	it satisfies
	the estimate $\abs{\tau(u)}=O(\rho_1^{\alpha/2}\rho_2^{\alpha/2})$ for any $\alpha\in(0,1)$ near $\bdryX$.
\end{prop}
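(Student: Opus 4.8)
The plan is to construct $u$ by hand near the corner $\cornerX$ as a superposition of the two factorwise Li--Tam extensions that make up $\tilde\varphi$, and then to glue this to arbitrary smooth extensions of $\tilde\varphi$ away from $\cornerX$ by a partition of unity, using that $\overline{B^{n+1}}$ is a convex subset of $\mathbb{R}^{n+1}$ to keep the result inside the ball.

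First, near a point $(q_1,q_2)\in\cornerX$ I would introduce upper-half plane coordinates $(x_1,y_1)$, $(x_2,y_2)$ around $q_1$, $q_2$ and upper-half space coordinates $(\xi,\eta^1,\dots,\eta^n)$ around $\varphi(q_1,q_2)$, and set
\begin{equation*}
	u=v_1+v_2-\varphi,
\end{equation*}
where $v_1:=\tilde\varphi|_{\{x_1=0\}}$ is regarded as a function of $(x_2,y_1,y_2)$ (so independent of $x_1$), $v_2:=\tilde\varphi|_{\{x_2=0\}}$ as a function of $(x_1,y_1,y_2)$, and $\varphi$ as a function of $(y_1,y_2)$. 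Since $v_2|_{\{x_1=0\}}=\varphi=v_1|_{\{x_2=0\}}$, this $u$ restricts to $\tilde\varphi$ on the portion of $\bdryX$ in the chart. By the polyhomogeneous expansion \eqref{eq:polyhomogeneous-expansion} each single-factor Li--Tam extension is $C^{2,\alpha}$ up to the boundary, and by Theorem \ref{thm:smooth-dependence-of-harmonic-extension-on-hyperbolic-space} it depends $C^\infty$-smoothly on the transverse boundary variable; combining these, $v_1$, $v_2$, and hence $u$, are $C^{2,\alpha}$ up to the corner. Moreover, by \eqref{eq:Li-Tam-lemma-for-hyperbolic-disk} the $\xi$-components satisfy $v_1^0|_{\{x_2=0\}}=0$, $\partial_{x_2}v_1^0|_{\{x_2=0\}}=\sqrt{e_2(\varphi)}$, and similarly for $v_2^0$ in $x_1$, while $\varphi^0=0$. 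Since $e_1(\varphi)$ and $e_2(\varphi)$ are bounded below by a positive constant on the compact set $\cornerX$ thanks to \eqref{eq:nondegeneracy}, I can shrink the chart so that $v_1^0\geq cx_2$ and $v_2^0\geq cx_1$ for some $c>0$, whence $u^0=v_1^0+v_2^0>0$ off $\{x_1=x_2=0\}$. Thus this local $u$ is $B^{n+1}$-valued away from $\cornerX$ and equals $\varphi$ on $\cornerX$.

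Next I would cover $\overline X$ by finitely many open sets of three kinds: the corner charts just described, covering a neighborhood of $\cornerX$; neighborhoods of points of $\bdryX\setminus\cornerX$ chosen small enough to avoid $\cornerX$, on each of which $\tilde\varphi$ (which is $B^{n+1}$-valued there by the Li--Tam results) is extended to a $C^\infty$ map into $B^{n+1}$; and open subsets of $X$, on each of which one takes an arbitrary $C^\infty$ map into $B^{n+1}$. Taking a $C^\infty$ partition of unity $\{\chi_\lambda\}$ subordinate to this cover and setting $u=\sum_\lambda\chi_\lambda u_\lambda$, the convex combination being formed in the Euclidean coordinates on $\overline{B^{n+1}}\subset\mathbb{R}^{n+1}$, produces a map $u\in C^{2,\alpha}(\overline X,\overline{B^{n+1}})$ for every $\alpha\in(0,1)$, since each $u_\lambda$ is $C^{2,\alpha}$ up to the boundary and $\overline{B^{n+1}}$ is convex. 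At every point of $\overline X\setminus\cornerX$ all the $u_\lambda$ present there are valued in the open ball, so $u$ is too; at $\cornerX$ every $u_\lambda$ equals $\varphi$, so $u|_{\cornerX}=\varphi$; and $u|_{\bdryX}=\tilde\varphi$ since each $u_\lambda$ restricts to $\tilde\varphi$ on $\bdryX$. Hence $u$ satisfies all the requirements, and $\abs{\tau(u)}=O(\rho_1^{\alpha/2}\rho_2^{\alpha/2})$ near $\bdryX$ follows at once from Proposition \ref{prop:sufficient-conditions-of-approximate-solution}.

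The main obstacle, I expect, is the local analysis near the corner: verifying that the naive superposition $v_1+v_2-\varphi$ really is $C^{2,\alpha}$ up to $\cornerX$ --- which forces one to combine the one-variable boundary regularity furnished by Economakis's polyhomogeneous expansion with the parameter-smoothness supplied by Theorem \ref{thm:smooth-dependence-of-harmonic-extension-on-hyperbolic-space}, and to observe that the change of charts between the upper-half-space models and the ball model is smooth up to the boundary, so no regularity is lost --- together with verifying that the $\xi$-component stays strictly positive off the corner, which is exactly where the nondegeneracy \eqref{eq:nondegeneracy} is used. Once these local facts are established, the globalization by convex combination is routine.
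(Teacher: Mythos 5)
Your proposal is correct and has the same global structure as the paper's proof (localize near $\bdryX$, extend locally, patch with a partition of unity using the convexity of the ball in $\mathbb{R}^{n+1}$, then invoke Proposition \ref{prop:sufficient-conditions-of-approximate-solution}), but your local construction at the corner takes a genuinely different and simpler route. The paper expands each face value $\tilde\varphi|_{\{x_\lambda=0\}}$ to second order in the normal variable $x_\lambda$, adds the two polynomial parts, and merges the two remainders $R_1^i$, $R_2^i$ via an angular cutoff $\chi_1(\theta)R_1^i+\chi_2(\theta)R_2^i$ with $\theta=\arctan(x_2/x_1)$ (a real blowup of the corner); the bulk of the paper's proof then checks that the singular derivatives of $\theta$ are compensated by the decay $\abs{\nabla^k_{\overline{g}_\lambda}R_\lambda^i}=O(x_\lambda^{2-k+\alpha})$. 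Your inclusion--exclusion formula $u=v_1+v_2-\varphi$ is precisely the same map with $\chi_1R_1^i+\chi_2R_2^i$ replaced by $R_1^i+R_2^i$; since $R_\lambda^i$ is constant in $x_{3-\lambda}$, this sum is $C^{2,\alpha}$ on $\overline{U}_1\times\overline{U}_2$ as soon as each $R_\lambda^i$ (equivalently, each $v_\lambda$) is jointly $C^{2,\alpha}$ in its three variables, and the blowup is bypassed entirely. That joint regularity is exactly the input the paper itself uses --- Economakis's expansion for regularity in $(x_\lambda,y_\lambda)$ at fixed transverse boundary point, combined with Theorem \ref{thm:smooth-dependence-of-harmonic-extension-on-hyperbolic-space} for $C^\infty$ dependence on that point as a family in $C^{2,\alpha}$ --- and the one step you should spell out is the (routine) deduction that a $C^\infty$ family in a H\"older space is jointly H\"older of the same order. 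Your positivity argument for $u^0$ off the corner, via \eqref{eq:Li-Tam-lemma-for-hyperbolic-space-coordinates-omitted} --- more precisely via \eqref{eq:Li-Tam-lemma-for-hyperbolic-disk} and the uniform lower bound on $e_\lambda(\varphi)$ from \eqref{eq:nondegeneracy} on the compact set $\cornerX$ --- matches the paper's use of $\psi^0_{\lambda,1}=\sqrt{e_\lambda(\varphi)}>0$, and the verification that $u|_{\bdryX}=\tilde\varphi$ on both faces is as you state. What the paper's cutoff buys in exchange for the extra work is that its remainder is supported where $x_{3-\lambda}\lesssim x_\lambda$ and so decays in both normal variables simultaneously; but no such bidirectional decay is required by Proposition \ref{prop:sufficient-conditions-of-approximate-solution}, which only needs $u\in C^{2,\alpha}(\overline{X},\overline{B^{n+1}})$, $u|_{\bdryX}=\tilde\varphi$, and \eqref{eq:properness-at-corner}, so your argument suffices for the stated conclusion.
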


\begin{proof}
	The claim follows if such an extension of $\tilde{\varphi}$ exists in a neighborhood of each point on $\bdryX$.
	In fact, if we can cover a neighborhood of $\bdry X$ with open sets of $\overline{X}$ in which
	$\tilde{\varphi}$ admits such an extension, then we may regard those extensions as $\mathbb{R}^{n+1}$-valued
	functions and patch them up using a partition of unity, using the convexity of
	the ball in $\mathbb{R}^{n+1}$.
	The map from a neighborhood of $\bdry X$ into $\overline{B^{n+1}}$ constructed this way
	can easily be extended to a map defined on the whole $\overline{X}$ without changing its values near $\bdryX$.

	Furthermore, it is clear that a desired extension can be locally constructed near any point on
	$\bdryX\setminus\cornerX$.
	Therefore, it suffices if we can construct an extension $u$ of $\tilde{\varphi}$
	around an arbitrarily fixed point $(q_1,q_2)\in\bdry D\times\bdry D$ so that
	$u$ is $C^{2,\alpha}$ for any $\alpha\in(0,1)$.
	Take an open neighborhood $\overline{U}_\lambda$ of each $q_\lambda$ in $\overline{D}$ so that
	$\overline{U}_\lambda$ is a rectangle sitting in an upper-half plane model.
	Let $\bdry U_\lambda=\overline{U}_\lambda\cap\set{x_\lambda=0}$ and
	$U_\lambda=\overline{U}_\lambda\setminus\bdry U_\lambda$.

	We may assume that $\tilde{\varphi}(\bdryX\cap(\overline{U}_1\times\overline{U}_2))$
	is contained in the closed upper-half space,
	and let us write $\tilde{\varphi}=(\tilde{\varphi}^0,\tilde{\varphi}^1,\dots,\tilde{\varphi}^n)$
	using such upper-half space coordinates.
	Then, the proof of Theorem \ref{thm:smooth-dependence-of-harmonic-extension-on-hyperbolic-space} shows that
	\begin{align}
		\tilde{\varphi}^0(x_1,y_1,0,y_2)
		&=\sum_{k=1}^2 x_1^k\psi_{1,k}^0(y_1,y_2)+R_1^0(x_1,y_1,y_2),\\
		\tilde{\varphi}^i(x_1,y_1,0,y_2)
		&=\varphi^i(y_1,y_2)+\sum_{k=1}^2 x_1^k\psi_{1,k}^i(y_1,y_2)+R_1^i(x_1,y_1,y_2), \qquad 1\leqq i\leqq n,\\
		\tilde{\varphi}^0(0,y_1,x_2,y_2)
		&=\sum_{k=1}^2 x_2^k\psi_{2,k}^0(y_1,y_2)+R_2^0(x_2,y_1,y_2),\\
		\tilde{\varphi}^i(0,y_1,x_2,y_2)
		&=\varphi^i(y_1,y_2)+\sum_{k=1}^2 x_2^k\psi_{2,k}^i(y_1,y_2)+R_2^i(x_2,y_1,y_2), \qquad 1\leqq i\leqq n,
	\end{align}
	where $\psi_{\lambda,k}^i$, $0\leqq i\leqq n$, are $C^\infty$-smooth and
	$R_\lambda^i$, $0\leqq i\leqq n$, are $C^{2,\alpha}$ for any $\alpha\in(0,1)$.
	Moreover, the remark at the end of the last section implies
	\begin{equation}
		\abs{\nabla_{\overline{g}_\lambda}^kR_\lambda^i(x_\lambda,y_1,y_2)}=O(x_\lambda^{2-k+\alpha})
	\end{equation}
	and
	\begin{equation}
		x_\lambda^{-2+k}\abs{\nabla_{\overline{g}_\lambda}^kR_\lambda^i(x_\lambda,y_1,y_2)}\in
		C^\alpha(\overline{U}_1\times\overline{U}_2)
	\end{equation}
	for $k=0$, $1$, $2$.

	We construct a map $u$ defined in $\overline{U}_1\times \overline{U}_2$
	that extends $\tilde{\varphi}$ in the following way.
	We set
	\begin{equation}
		u^0(x_1,y_1,x_2,y_2)
		=\sum_{k=1}^2 x_1^k\psi_{1,k}^0(y_1,y_2)+\sum_{k=1}^2 x_2^k\psi_{2,k}^0(y_1,y_2)+R^0(x_1,y_1,x_2,y_2)
	\end{equation}
	and
	\begin{equation}
		u^i(x_1,y_1,x_2,y_2)
		=\varphi^i(y_1,y_2)
		+\sum_{k=1}^2 x_1^k\psi_{1,k}^i(y_1,y_2)
		+\sum_{k=1}^2 x_2^k\psi_{2,k}^i(y_1,y_2)+R^i(x_1,y_1,x_2,y_2),
	\end{equation}
	where we define $R^i$, $0\leqq i\leqq n$, as follows---we use real blowup around the corner.
	Note that
	\begin{equation*}
		\theta(x_1,x_2)=\arctan\left(\frac{x_2}{x_1}\right)
	\end{equation*}
	is a smooth function on $(\overline{U}_1\times\overline{U}_2)\setminus\set{x_1=x_2=0}$
	taking values in $[0,\pi/2]$.
	Let $\chi_1$ and $\chi_2$ be smooth functions $[0,\pi/2]\to\mathbb{R}_{\geqq 0}$
	such that $\chi_1\equiv 1$ near $0$, $\chi_2\equiv 1$ near $\pi/2$, and $\chi_1+\chi_2\equiv 1$.
	We define, in $(\overline{U}_1\times\overline{U}_2)\setminus\set{x_1=x_2=0}$,
	\begin{equation}
		\label{eq:approximate-solution-merge-R}
		R^i(x_1,y_1,x_2,y_2)=\chi_1(\theta(x_1,x_2))R^i_1(x_1,y_1,y_2)+\chi_2(\theta(x_1,x_2))R^i_2(x_2,y_1,y_2).
	\end{equation}
	On $\set{x_1=x_2=0}$, we set $R^i(0,y_1,0,y_2)=0$.
	Then the map $u\colon\overline{U}_1\times\overline{U}_2\to\overline{H_+^{n+1}}$
	defined by $(u^0,u^1,\dots,u^n)$ is a continuous extension of $\tilde{\varphi}$,
	and the image of $(\overline{U}_1\times \overline{U}_2)\setminus\set{x_1=x_2=0}$ is actually
	contained in $H_+^{n+1}$ if $\overline{U}_1$ and $\overline{U}_2$ are sufficiently small
	because $\psi^0_{\lambda,1}=\sqrt{e_\lambda(\varphi)}>0$.

	We shall verify that $u=(u^0,u^1,\dots,u^n)$ so constructed is
	$C^{2,\alpha}$ in $\overline{U}_1\times \overline{U}_2$.
	It suffices to show that the functions $R^i(x_1,y_1,x_2,y_2)$ are $C^{2,\alpha}$.
	We are going to prove that each term on the right-hand side of
	\eqref{eq:approximate-solution-merge-R} is $C^{2,\alpha}$.

	We only show that $\chi_1(\theta(x_1,x_2))R^i_1(x_1,y_1,y_2)$ is $C^{2,\alpha}$; the other term can be
	treated similarly. Note that
	\begin{alignat*}{3}
		\partial_{x_1}\theta&=\frac{-x_2}{x_1^2+x_2^2}, \qquad
		&\partial_{x_1}^2\theta&=\frac{2x_1x_2}{(x_1^2+x_2^2)^2},\\
		\partial_{x_2}\theta&=\frac{x_1}{x_1^2+x_2^2}, \qquad
		&\partial_{x_2}^2\theta&=\frac{-2x_1x_2}{(x_1^2+x_2^2)^2}, \qquad
		&\partial_{x_1}\partial_{x_2}\theta&=\frac{-x_1^2+x_2^2}{(x_1^2+x_2^2)^2}.
	\end{alignat*}
	The first derivatives of $\chi_1(\theta(x_1,x_2))R_1^i(x_1,y_1,y_2)$ are therefore
	\begin{equation*}
		\partial_{x_1}(\chi_1(\theta)R_1^i)
		=\frac{-x_2}{x_1^2+x_2^2}\chi_1'(\theta)R_1^i+\chi_1(\theta)\partial_{x_1}R_1^i
	\end{equation*}
	and
	\begin{equation*}
		\partial_{x_2}(\chi_1(\theta)R_1^i)
		=\frac{x_1}{x_1^2+x_2^2}\chi_1'(\theta)R_1^i.
	\end{equation*}
	These derivatives are continuous up to the corner, and they vanish on the corner,
	because $R_1^i=O(x_1^{2+\alpha})$ and $\partial_{x_1}R_1^i=O(x_1^{1+\alpha})$
	(actually, we only use $R_1^i=O(x_1^{1+\alpha})$ and $\partial_{x_1}R_1^i=O(x_1^\alpha)$ in this step).
	The continuity of the $y$-derivatives are obvious.
	Next, we consider the second derivatives. For example,
	\begin{multline}
		\label{eq:second-derivative-in-x1}
		\partial_{x_1}^2(\chi_1(\theta)R_1^i)
		=\frac{x_2^2}{(x_1^2+x_2^2)^2}\chi_1''(\theta)R_1^i
		+\frac{2x_1x_2}{(x_1^2+x_2^2)^2}\chi_1'(\theta)R_1^i \\
		\phantom{=\;}\,+2\cdot \frac{-x_2}{x_1^2+x_2^2}\chi_1'(\theta)\partial_{x_1}R_1^i
		+\chi_1(\theta)\partial_{x_1}^2R_1^i,
	\end{multline}
	and this is continuous up to the corner, and it vanishes on the corner,
	because $\partial_{x_1}^kR_1^i=O(x_1^{2-k+\alpha})$ for $k=0$, $1$, $2$.
	To summarize, we have shown that $\chi_1(\theta(x_1,x_2))R^i_1(x_1,y_1,y_2)$ is $C^2$
	in $\overline{U_1}\times\overline{U_2}$.

	It remains to show that
	$\partial_{x_1}^2(\chi_1(\theta)R_1^i)\in C^\alpha(\overline{U}_1\times\overline{U}_2)$.
	We shall prove that
	\begin{equation}
		I(x_1,y_1,x_2,y_2)=\frac{x_2^2}{(x_1^2+x_2^2)^2}\chi_1''(\theta)R_1^i(x_1,y_1,y_2)
	\end{equation}
	is in $C^\alpha(\overline{U}_1\times\overline{U}_2)$,
	as the rest of the terms on the right-hand side of \eqref{eq:second-derivative-in-x1} can be treated similarly.
	Let us write $R^i_1(x_1,y_1,y_2)=x_1^2S_1^i(x_1,y_1,y_2)$;
	then $S_1^i$ is in $C^\alpha(\overline{U}_1\times\overline{U}_2)$ and is $O(x_1^\alpha)$.
	We can express
	\begin{equation}
		I(x_1,y_1,x_2,y_2)=f(\theta)S_1^i(x_1,y_1,y_2),
	\end{equation}
	where $f\in C^\infty([0,\pi/2])$.
	Consequently, for $(x_1,y_1,x_2,y_2)$, $(x'_1,y'_1,x'_2,y'_2)\in\overline{U}_1\times\overline{U}_2$
	satisfying $x_2/x_1=x'_2/x'_1$, we have
	\begin{equation}
		\begin{split}
			\abs{I(x_1,y_1,x_2,y_2)-I(x'_1,y'_1,x'_2,y'_2)}
			&=
			\abs{f(\theta)}\abs{S_1^i(x_1,y_1,y_2)-S_1^i(x'_1,y'_1,y'_2)}\\
			&\leqq
			C\abs{(x_1,y_1,y_2)-(x'_1,y'_1,y'_2)}^\alpha.
		\end{split}
	\end{equation}
	On the other hand,	
	\begin{equation}
		\begin{split}
			&\abs{I(r\cos(\theta+\delta),y_1,r\sin(\theta+\delta),y_2)-I(r\cos\theta,y_1,r\sin\theta,y_2)}\\
			&\leqq\abs{f(\theta+\delta)-f(\theta)}\abs{S_1^i(r\cos\theta,y_1,y_2)}
			+\abs{f(\theta+\delta)}\abs{S_1^i(r\cos(\theta+\delta),y_1,y_2)-S_1^i(r\cos\theta,y_1,y_2)}\\
			&\leqq
			Cr^\alpha\abs{\delta}^\alpha.
		\end{split}
	\end{equation}
	We obtain $I(x_1,y_1,x_2,y_2)\in C^\alpha(\overline{U}_1\times\overline{U}_2)$ by combining these two
	estimates.
\end{proof}

\section{Proof of the main theorem}
\label{sec:proof-main}

To complete the proof of Theorem \ref{thm:main},
we use results from Li--Tam \cite{Li-Tam-91} on the heat equation
\begin{equation}
	\label{eq:heat-equation}
	\frac{\partial u(\orddot,t)}{\partial t}=\tau(u(\orddot,t))
\end{equation}
for harmonic maps.

\begin{proof}[Proof of Theorem \ref{thm:main}]
	Take an approximate harmonic map constructed in Proposition \ref{prop:approx-solution},
	which we write $v$ instead of $u$.
	Then, it follows from \cite{Li-Tam-91}*{Theorem 4.2} (which is on p.\ 25, misprinted as ``Theorem 4.1'') that
	\eqref{eq:heat-equation} has a unique solution with initial value $u(\orddot,0)=v$
	such that $\sup_{M\times[0,T]}e(u(\orddot,t))<\infty$ for all $T>0$.

	As $\sigma(X)=\sigma(\mathbb{H}^2\times\mathbb{H}^2)=\overline{\sigma(\mathbb{H}^2)+\sigma(\mathbb{H}^2)}$
	and $\sigma(\mathbb{H}^2)=[1/4,\infty)$, the spectrum of $X$ has a positive lower bound.
	Moreover, since $\abs{\tau(v)}=O(\rho_1^{\alpha/2}\rho_2^{\alpha/2})$ for any $\alpha\in(0,1)$,
	$\abs{\tau(v)}^2$ is in $L^p(X)$ for any $p>1$.
	In view of these facts, we may invoke \cite{Li-Tam-91}*{Theorem 5.2}, or the arguments in the proof thereof.
	It follows that
	\begin{equation}
		\Abs{\frac{\partial u(\orddot,t)}{\partial t}}\leqq Ce^{-at}\quad\text{in $X$}
	\end{equation}
	for some $a>0$ and $C>0$,
	and that $u(\orddot,t)$ converges uniformly in $X$ as $t\to\infty$ to a harmonic map $u_\infty$.
	Since the tension field of $v$ uniformly tends to zero at $\bdry X$, one can prove that
	\begin{equation}
		\label{eq:decay-of-time-derivative-at-topological-boundary}
		\lim_{p\to\bdry X}\sup_{t\in[0,T]}\Abs{\frac{\partial u(p,t)}{\partial t}}=0\quad\text{uniformly along $\bdry X$}
	\end{equation}
	for any $T>0$ and that
	\begin{equation}
		\label{eq:decay-of-distance-of-approximation-and-genuine-solution}
		\lim_{p\to\bdry X}d_{\mathbb{H}^{n+1}}(u_\infty(p),v(p))=0\quad\text{uniformly along $\bdry X$.}
	\end{equation}
	Therefore, the Euclidean distance between $u_\infty(p)$ and $v(p)$ also converges to zero as $p\to\bdry X$,
	which implies that $u_\infty$ extends to a continuous map $\overline{X}\to\overline{B^{n+1}}$
	and that $u_\infty|_{\bdryX}=v|_{\bdryX}=\tilde{\varphi}$.
	It follows from the latter that $u_\infty|_{\cornerX}=\varphi$ and
	$u_\infty(\overline{X}\setminus\cornerX)\subset B^{n+1}$.
\end{proof}

Finally, we remark that the estimate \eqref{eq:decay-of-distance-of-approximation-and-genuine-solution}
can be improved by following the proof of \cite{Li-Tam-93-I}*{Lemma 3.2}.
Let us first observe that, if we define $\rho\colon D\to\mathbb{R}$ by $\rho(p)=1-d_{\mathrm{Euc}}(0,p)^2$
and set $\rho_\lambda=\rho\compose\pi_\lambda$, $\pi_\lambda\colon X=D\times D\to D$ being the projection
to the $\lambda$-th factor, then a straightforward computation shows that
\begin{equation}
	\Delta_g(\rho_1^s\rho_2^s)\leqq 0,\qquad 0\leqq s\leqq 1,
\end{equation}
where $g$ is the product metric of the Poincar\'e metrics.
On the other hand, since the target has negative sectional curvature,
a result of Hartman \cite{Hartman-67} implies that
\begin{equation}
	\left(\Delta_g-\frac{\partial}{\partial t}\right)\left(\Abs{\frac{\partial u}{\partial t}}^2\right)\geqq 0.
\end{equation}
Consequently, for any $\alpha\in(0,1)$ and any $C'>0$, we have
\begin{equation}
	\left(\Delta_g-\frac{\partial}{\partial t}\right)
	\left(\Abs{\frac{\partial u}{\partial t}}^2-C'\rho_1^\alpha\rho_2^\alpha\right)\geqq 0.
\end{equation}

Let $\alpha$ be arbitrarily fixed, and take $\alpha'=\alpha+\varepsilon<1$, $\varepsilon>0$.
Then, since $\tau(v)=O(\rho_1^{\alpha'/2}\rho_2^{\alpha'/2})$ near $\bdryX$,
if we take sufficiently large $C'>0$, we have
\begin{equation}
	\label{eq:boundary-value-estimate-for-heat-flow-maximum-principle}
	\Abs{\frac{\partial u}{\partial t}}^2-C'\rho_1^{\alpha'}\rho_2^{\alpha'}\leqq 0
\end{equation}
at $t=0$. This fact, \eqref{eq:decay-of-time-derivative-at-topological-boundary},
and the maximum principle together imply that
\eqref{eq:boundary-value-estimate-for-heat-flow-maximum-principle} holds in $X\times[0,\infty)$.
Note that
\begin{equation}
	d_{\mathbb{H}^{n+1}}(u_\infty(\orddot),v(\orddot))
	\leqq\int_0^\infty\Abs{\frac{\partial u}{\partial t}}dt
	\leqq\int_0^T\Abs{\frac{\partial u}{\partial t}}dt
	+\int_T^\infty Ce^{-at}dt
	=\int_0^T\Abs{\frac{\partial u}{\partial t}}dt+\frac{C}{a}e^{-aT}
\end{equation}
for any $T>0$.
Based on this, for any $p\in X$, we can choose $T$ so that $e^{-aT}=\rho_1(p)^{\alpha'/2}\rho_2(p)^{\alpha'/2}$
to conclude
\begin{equation}
	d_{\mathbb{H}^{n+1}}(u_\infty(p),v(p))
	\leqq C''\rho_1(p)^{\alpha'/2}\rho_2(p)^{\alpha'/2}\log\left(\frac{1}{\rho_1(p)\rho_2(p)}\right),
\end{equation}
where $C''>0$ is independent of $p$.
Therefore, we obtain that
$d_{\mathbb{H}^{n+1}}(u_\infty(\orddot),v(\orddot))=O(\rho_1^{\alpha/2}\rho_2^{\alpha/2})$ near $\bdryX$
for any $\alpha\in(0,1)$.

\bibliography{myrefs}

\end{document}